\documentclass[11pt]{article}
\usepackage[T1]{fontenc}
\usepackage[margin=1.10in]{geometry}
\usepackage{amsmath,amssymb,mathtools,amsthm}
\usepackage{enumitem}
\usepackage{microtype}
\usepackage{titlesec}
\usepackage{titling}
\usepackage{abstract}
\usepackage[hidelinks]{hyperref}
\usepackage{bm}

\numberwithin{equation}{section}
\allowdisplaybreaks[2]
\setlist[enumerate]{topsep=0.45em,itemsep=0.15em,parsep=0pt}
\titleformat{\section}[block]
  {\centering\large\bfseries}
  {\thesection.}{0.70em}{}
\titleformat{name=\section,numberless}[block]
  {\centering\large\bfseries}
  {}{0pt}{}
\titlespacing*{\section}{0pt}{3.0ex plus 1ex minus .2ex}{1.5ex plus .2ex}

\titleformat{\subsection}[hang]
  {\normalsize\bfseries}
  {\thesubsection.}{0.60em}{}
\titleformat{name=\subsection,numberless}[block]
  {\centering\normalsize\bfseries}
  {}{0pt}{}
\titlespacing*{\subsection}{0pt}{2.25ex plus .7ex minus .2ex}{0.9ex plus .1ex}

\pretitle{\begin{center}\LARGE\bfseries}
\posttitle{\par\end{center}\vspace{0.45em}}
\preauthor{\begin{center}\normalsize}
\postauthor{\par\end{center}\vspace{-0.65em}}
\predate{}
\postdate{}

\newtheorem{theorem}{Theorem}[section]
\newtheorem{lemma}[theorem]{Lemma}
\newtheorem{proposition}[theorem]{Proposition}
\newtheorem{corollary}[theorem]{Corollary}

\newcommand{\A}{\mathbb A}
\newcommand{\Sph}{\mathbb S^{n-1}}
\newcommand{\dd}{\,\mathrm d}
\newcommand{\Kcal}{\mathcal K}
\newcommand{\Ncal}{\mathcal N}
\newcommand{\Tcal}{\mathcal T}
\newcommand{\Rplus}{R_{n,+}}
\newcommand{\Rminus}{R_{n,-}}
\newcommand{\Rstar}{R_*}
\newcommand{\ip}[2]{\langle #1,#2\rangle}

\title{The Higher-Dimensional Nitsche Conjecture:\\ Sharp Bounds and Rigidity}
\author{Bin Deng\qquad Jiahuan Li\qquad Yilu Liu\qquad Xi-Nan Ma}
\date{}

\begin{document}
\maketitle

\begin{abstract}
Let $n\ge3$ and let
$h:\A(r,1)\to\A(R,1)\subset\mathbb R^n$
be an onto homeomorphism with harmonic coordinate functions.  We prove the
sharp Nitsche bound
\[
 R\le R_{n,+}(r):=\frac{nr}{n-1+r^n},
\]
and, when $h$ interchanges the two ends, the strictly stronger sharp bound
\[
 R\le R_{n,-}(r):=\frac{nr^{n-1}}{1+(n-1)r^n}.
\]
Both critical cases are rigid: equality forces, up to an orthogonal
transformation, the corresponding end-preserving or end-reversing radial
harmonic homeomorphism.  No continuous extension to the closed annulus,
boundary homeomorphism, boundary Jacobian, or sign condition on the Jacobian
is assumed.

The proof converts the nonzero degree of each interior direction map into a
probability coupling and establishes a sharp contraction principle for vector
measures under positive zonal kernels, using the strict concavity of
spherical-cap barycenters.  At either critical value, a second-order endpoint
defect forces equality for a limiting transfer kernel, whose equality
classification yields an orthogonal coupling graph.  The remaining trace is
locked by a Dirichlet-to-Neumann spectral gap in the end-preserving case and
by endpoint H\"older regularity and uniform convergence of the direction maps
in the end-reversing case.
\end{abstract}

\section{Introduction and main results}\label{sec:introduction}

\subsection*{Harmonic homeomorphisms and global rigidity}
Harmonic maps are among the basic elliptic objects of geometric analysis
\cite{EellsSampson,SchoenUhlenbeck}.  When the target is Euclidean, the
harmonic map system reduces to the linear equation
\[
 \Delta h=0
\]
coordinatewise.  Requiring $h$ to be a homeomorphism, however, places this
linear equation inside a highly nonlinear and nonconvex global admissible
class.  The resulting \emph{harmonic mapping problem} asks when two prescribed
domains can be connected by a harmonic homeomorphism
\cite{IKOHarmonicProblem}.  Its difficulty is not local solvability of the
Dirichlet problem, but compatibility between harmonic averaging and the
prohibition of folding.

The contrast between two and higher dimensions is especially sharp.  In the
plane, the Rad\'o--Kneser--Choquet theorem, Lewy's non-vanishing theorem,
complex derivatives, and the theory of univalent harmonic mappings provide
strong injectivity principles; see \cite{Lewy,ClunieSheilSmall,Duren}.

\pagebreak[2]
For general harmonic homeomorphisms, the corresponding statement fails in
higher dimensions \cite{Wood}.  There is, nevertheless, a special higher-dimensional
Lewy theory for potential maps.  In dimension three, Lewy proved that if $u$
is harmonic and $Du$ is a homeomorphism, then $\det D^2u$ cannot vanish, so
$Du$ is a diffeomorphism \cite{LewyHarmonicGradients}.  Li, Liu, and Ma
recently obtained a local $p$-harmonic analogue under the noncritical
assumption $|Du|>0$ \cite{LiLiuMaLocalLewy}.  These results concern the
curl-free class of gradient maps and therefore do not apply directly to a
general vector-valued harmonic homeomorphism, whose components need not arise
from a common potential.  More globally, Laugesen constructed homeomorphisms
of the sphere whose componentwise harmonic extensions to the ball are not
injective \cite{Laugesen}.  Thus, in dimensions $n\ge3$, neither boundary
injectivity nor a local sign condition on the Jacobian can by itself control
global injectivity.  A natural alternative is to start with a harmonic map
already known to be a homeomorphism and ask what quantitative and qualitative
rigidity is forced by the interaction of topology and ellipticity.

\subsection*{The Nitsche problem}
Spherical annuli provide the first symmetric model in which this global
question has a nontrivial geometric parameter.  In the normalized notation
\[
 \A(r,1)=\{x\in\mathbb R^n:r<|x|<1\},
 \qquad 0<r<1,
\]
a radial harmonic map has the form
\[
 h(t,\omega)=H(t)Q\omega,
 \qquad H(t)=At+Bt^{1-n},
 \qquad Q\in O(n).
\]
For an end-preserving critical map, the conditions $H(1)=1$ and $H'(r)=0$
give
\[
 H_{n,r}(t)=\frac{(n-1)t+r^nt^{1-n}}{n-1+r^n},
 \qquad
 H_{n,r}(r)=\frac{nr}{n-1+r^n}.
\]
If the ends are interchanged, the corresponding radial critical map is
selected by $H(r)=1$ and $H'(1)=0$ and has outer value
\[
 H_{n,r}^{-}(1)=\frac{nr^{n-1}}{1+(n-1)r^n}.
\]
The central question is whether these radial thresholds remain sharp among
\emph{all} harmonic homeomorphisms, without any symmetry assumption, and
whether saturation forces the entire map to be radial up to the natural
orthogonal action.

In dimension two, Nitsche formulated the corresponding conjecture in 1962
\cite{Nitsche}; the problem also arose from the geometry of doubly connected
minimal surfaces \cite{Nitsche1964,IKOMinimalSurfaces}.  A sequence of partial
estimates and structural results was obtained by Lyzzaik, Weitsman, and Kalaj
\cite{Lyzzaik,Weitsman,Kalaj2005}.  Iwaniec, Kovalev, and Onninen then
developed a sharp analysis of harmonic mappings of annuli \cite{IKO2010} and
proved the planar Nitsche conjecture in full \cite{IKO}.  The related harmonic
mapping problem for general doubly connected domains remains an active part of
geometric function theory
\cite{IKOHarmonicProblem,BshoutyLyzzaikRasilaVasudevarao}.

For Euclidean spherical annuli in higher dimensions, Kalaj studied the
three-dimensional problem and further Nitsche-type estimates
\cite{Kalaj2005,Kalaj2007}.  The end-preserving radial quantity
\[
 \frac{nr}{n-1+r^n}
\]
was subsequently recorded as the natural $n$-dimensional Nitsche candidate,
with the case $n\ge3$ left open \cite{Kalaj}.  There is also a substantial
surrounding literature on annuli on Riemann surfaces, hyperbolic targets, and
polyharmonic variants
\cite{Kalaj2011,KalajHyperbolic,KalajPonnusamy,Kalaj2016,Kalaj}, as well as on
variational Nitsche phenomena for $p$-harmonic, $n$-harmonic, weighted, and
nonlinear-elastic deformations
\cite{IwaniecOnninenP,AstalaIwaniecMartin,IwaniecOnninenNeo,
IwaniecKohKovalevOnninen,IwaniecOnninenN,MartinMcKubreJordens,
KoskiOnninen,Kalaj2019,Kalaj2021,ChenKalaj,KalajZhu}.  These theories are
closely related in geometric motivation, but their Euler--Lagrange equations
are nonlinear and differ from the coordinatewise Laplace equation considered
here.

\subsection*{Main results}
For $0<r<1$ write
\[
 \A(r,1)=\{x\in\mathbb R^n:r<|x|<1\}.
\]
Throughout, a map $h=(h_1,\dots,h_n)$ is called harmonic if every coordinate $h_j$ is harmonic in the Euclidean sense.

Define
\begin{equation}\label{eq:Rpm}
 \Rplus(r)=\frac{nr}{n-1+r^n},
 \qquad
 \Rminus(r)=\frac{nr^{n-1}}{1+(n-1)r^n}.
\end{equation}
The next two theorems give the normalized sharp bounds and their equality
cases separately.

\begin{theorem}[Sharp bounds]\label{thm:sharp-bounds}
Let $n\ge3$, $0<r,R<1$, and let
\[
 h:\A(r,1)\overset{\rm onto}{\longrightarrow}\A(R,1)
\]
be a homeomorphism with harmonic coordinate functions.

\begin{enumerate}[label=\rm(\roman*)]
\item One always has
\[
 R\le \Rplus(r)=\frac{nr}{n-1+r^n}.
\]
\item If the inner end of the source corresponds to the outer end of the target, then
\[
 R\le \Rminus(r)=\frac{nr^{n-1}}{1+(n-1)r^n}.
\]
This bound is strictly stronger than the one in \rm(i).
\end{enumerate}
Both constants are sharp.
\end{theorem}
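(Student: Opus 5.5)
We argue through spherical means of $h$, using only interior harmonicity and the degree of the direction maps. For $r<t<1$ set
\[
 \Phi(t)=\int_{\Sph} h(t\omega)\cdot\omega\,\dd\sigma(\omega),
\]
with $\sigma$ the normalized surface measure. Since each coordinate $h_j$ is harmonic, the spherical moment $\int h_j(t\omega)\omega_j\,\dd\sigma$ is the degree-one spherical-harmonic coefficient of $h_j$. Hence $\Phi$ solves the radial ODE for degree-one harmonics and has the form
\[
 \Phi(t)=At+Bt^{1-n}.
\]
This is the same two-parameter family as the radial profiles $H$. The plan is to trap $\Phi$ between inequalities inherited from the target annulus and then optimize over $(A,B)$, which reproduces exactly the computation giving $H_{n,r}$ and $H^-_{n,r}$.

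\textbf{Step 1 (pointwise bounds).} Trivially $\Phi(t)\le\sup|h|\le 1$, and $|h(x)|>R$ on the whole annulus. We cannot use boundary values, so we work with the limits of $|h(t\omega)|$ as $t\to1$ and $t\to r$. Because $h$ is a proper homeomorphism onto $\A(R,1)$, these limits tend uniformly to $1$ or to $R$ at the respective ends, according to whether $h$ preserves or interchanges them.

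\textbf{Step 2 (lower bound via direction maps).} We need $\Phi(t)\ge c\,\min|h|$ near an end where $|h|\to R$. More precisely, we want $\Phi(t)\approx R$ at the end sent to the inner sphere, together with the vanishing of $\Phi'$ there. This is the heart of the matter. The direction map $\omega\mapsto h(t\omega)/|h(t\omega)|$ has degree $\pm1$, but $h\cdot\omega$ could still be negative somewhere. My plan is to replace $\Phi$ by
\[
 \sup_{Q\in SO(n)}\int_{\Sph} h(t\omega)\cdot Q\omega\,\dd\sigma
\]
and to use the degree, via a coupling between $\sigma$ and the pushforward of $\sigma$ under the direction map, to show that this quantity is at least $|h|_{\min}$ up to a barycenter loss. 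A contraction inequality for positive zonal kernels, given by the Poisson-type averaging from radius $t$ to radius $s$, should show that the loss is of second order at the critical end. Concretely, near the end mapped to the inner sphere, $\Phi(t)=R+o(|t-t_0|)$ and hence $\Phi'(t_0)=0$. Since $|h|\ge R$ while $\Phi\le|h|$ on average, $\Phi$ attains the value $R$ with horizontal tangent at that end. At the other end, $\Phi\to$ (at most) $1$.

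\textbf{Step 3 (ODE optimization).} In the end-preserving case the conditions are $\Phi(1)\le1$, $\Phi(r)\ge R$ and $\Phi'(r)=0$ (in the limiting sense). These force $B=Ar^n/(n-1)$ and
\[
 R\le\Phi(r)=\frac{\Phi(r)}{\Phi(1)}\,\Phi(1)\le\frac{H_{n,r}(r)}{H_{n,r}(1)}=\Rplus(r).
\]
In the end-reversing case the roles are swapped: $\Phi(r)\le1$, $\Phi(1)\ge R$, $\Phi'(1)=0$, which gives $\Rminus(r)$. For the comparison $\Rminus<\Rplus$, note that it is equivalent to
\[
 r^{n-2}\bigl(n-1+r^n\bigr)<1+(n-1)r^n .
\]
This follows from convexity, for instance via the weighted AM--GM inequality $r^{n-2}\cdot\ldots$, or from the fact that $f(r)=1+(n-1)r^n-(n-1)r^{n-2}-r^{2n-2}$ satisfies $f(1)=0$ and $f'<0$ on $(0,1)$. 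Sharpness is witnessed by the radial maps $H_{n,r}(t)\omega$ and $H^-_{n,r}$, which are homeomorphisms because their profiles are monotone on $(r,1)$.

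\textbf{Expected main obstacle.} The difficulty is Step 2. One must show, with no boundary regularity and no Jacobian sign, that the first spherical moment at the critical end is pinned to $R$ with zero derivative. Indeed, $\Phi\le\max|h|$ is easy, but $\Phi\ge R$ is false in general without a rotation and a coupling argument. I would first try the coupling or transport via the degree, combined with the strict concavity of cap barycenters, to obtain $\Phi_Q(t)\ge R-o(t-t_0)$ for an optimal $Q$.
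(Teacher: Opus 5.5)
Your Step~2 is not only the hard step; as formulated it is false, and Step~3 depends on it.

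Test it on the identity map of $\A(r,1)$. It is admissible with $R=r<\Rplus(r)$. Here $\Phi(t)=t$, and $\sup_{Q}\int h(t\omega)\cdot Q\omega\,\dd\sigma=t$ as well. So $\Phi(t)-R=t-r$ is not $o(t-r)$, and $\Phi'(r)=1$. In fact, if $\Phi'(r)=0$ held for every admissible $h$, then $\Phi$ would always be a multiple of $H_{n,r}$, and $\Phi(r)/\Phi(1)$ would always equal $\Rplus(r)$. Second-order contact at the inner end happens only in the critical case; the paper uses it for rigidity, not for the bound.

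The lower bound fails too. With the measurable traces, $h(t,\cdot)\to R\phi$ in $L^2$ as $t\downarrow r$, so $\int h(t\omega)\cdot Q\omega\,\dd\sigma\to R\ip{\phi}{Q\omega}$. Then $\sup_Q\Phi_Q(r)\ge R$ would force $\phi$ to be a.e.\ orthogonal. That is false, for instance, for the harmonic extension of near-identity non-orthogonal boundary diffeomorphisms, which is a homeomorphism of $\A(r,1)$. Finally, $\sup_Q\Phi_Q$ is not of the form $At+Bt^{1-n}$, so the ODE optimization of Step~3 would not apply to it anyway.

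The missing idea is to use the degree differently. It should not give a lower bound on a degree-one moment of $h$. It should produce a quantity bounded below by $R$ and above by a degree-one profile whose endpoint values do not depend on $h$. The paper proceeds as follows.

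1. It uses surjectivity of $q_t$ to choose a Borel right inverse $\sigma_t$ and sets $\gamma_t=(x,\sigma_t(x))_\#\mu$. This puts the uniform marginal on the image-direction variable $x$.
2. The orientation matters. With your coupling (push-forward under $q_t$), the vector measure would be $q_t\,\dd\mu$. A degree-one $q_t$ that sends most of the sphere near one point makes $\|A_tq_t\|_1$ close to the zero-mode multiplier $a_0(t)>a_1(t)$. So the cap-barycenter contraction fails in that orientation.
3. On the support of $\gamma_t$ one gets $\int x\cdot h(t,\omega)\,\dd\gamma_t=\int|h(t,\omega)|\,\dd m_t>R$.
4. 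Insert $h(t,\cdot)=RA_t\phi+B_t\psi$ and move $A_t,B_t$ onto $\Ncal_t$ by symmetry of the kernels. Then $\|A_t\Ncal_t\|_1\le a_1(t)$, $\|B_t\Ncal_t\|_1\le b_1(t)$ and $|\phi|=|\psi|=1$ give $R<Ra_1(t)+b_1(t)$ on $(r,1)$.

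The right side equals $R$ at $t=r$ and $1$ at $t=1$, by the multipliers alone. So it stays above $R$ and touches it at $r$. The resulting one-sided condition $Ra_1'(r)+b_1'(r)\ge0$ is exactly $R\le\Rplus(r)$. The reversed case is the same at $t=1$, starting from $R<a_1(t)+Rb_1(t)$.

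The endpoint condition is therefore an inequality for this auxiliary profile, not $\Phi'(r)=0$ for the moment of $h$. Your proof of $\Rminus<\Rplus$ via $f(1)=0$, $f'<0$ and your sharpness examples are fine.
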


\begin{theorem}[Critical rigidity]\label{thm:critical-rigidity}
Under the hypotheses of Theorem~\ref{thm:sharp-bounds}, equality is
classified as follows.
\begin{enumerate}[label=\rm(\roman*)]
\item If $R=\Rplus(r)$, then there exists $Q\in O(n)$ such that, for
$r<t<1$ and $\omega\in\Sph$,
\begin{equation}\label{eq:critical-map}
 h(t,\omega)=H_{n,r}(t)Q\omega,
 \qquad
 H_{n,r}(t)=\frac{(n-1)t+r^nt^{1-n}}{n-1+r^n}.
\end{equation}
\item If the ends are interchanged and $R=\Rminus(r)$, then there exists
$Q\in O(n)$ such that, for $r<t<1$ and $\omega\in\Sph$,
\begin{equation}\label{eq:reverse-critical-map}
 h(t,\omega)=H_{n,r}^{-}(t)Q\omega,
 \qquad
 H_{n,r}^{-}(t)=
 \frac{r^{n-1}\bigl((n-1)t+t^{1-n}\bigr)}{1+(n-1)r^n}.
\end{equation}
\end{enumerate}
\end{theorem}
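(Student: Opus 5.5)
We prove Theorem~\ref{thm:critical-rigidity} by following the proof of Theorem~\ref{thm:sharp-bounds} and tracking where its inequalities must become equalities. We use the notation of that argument. Fix $t\in(r,1)$ and let $\phi_t(\omega)=h(t\omega)/|h(t\omega)|$ be the interior direction map $\Sph\to\Sph$. Since $h$ is a homeomorphism of annuli, $\phi_t$ has degree $\pm1$, and in particular nonzero degree. Integrating the Poisson representation of $h$ on the sphere of radius $t$ against $\phi_t$ gives a probability coupling between $\Sph$ and itself. The contraction principle for vector measures under positive zonal kernels then bounds the spherical means of $|h|$ at radius $t$ by the radial profile. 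This profile is the solution of the ODE $H''+\frac{n-1}{t}H'=0$ with the endpoint conditions imposed by the ends, and the resulting bound yields $R\le\Rplus(r)$, respectively $R\le\Rminus(r)$.

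At the critical value $R=\Rplus(r)$ the radial comparison function $H_{n,r}$ satisfies $H_{n,r}'(r)=0$. Expanding the comparison to second order at the inner endpoint, the defect in the contraction inequality at radius $t$ must vanish to order $o((t-r)^2)$ as $t\downarrow r$.

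The next step is a compactness argument. After normalizing, we extract a limit as $t\downarrow r$ of the couplings $\pi_t$, giving a limiting transfer kernel. By the previous paragraph this kernel attains equality in the sharp contraction principle. The equality case of that principle relies on the strict concavity of the spherical-cap barycenter function. It forces the coupling to be concentrated on the graph $\{(\omega,Q\omega)\}$ of some $Q\in O(n)$. We then compose $h$ with $Q^{-1}$ so that the limiting direction map is the identity.

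It remains to lock the trace. In the end-preserving case set $v=h-H_{n,r}(t)\omega$. This is harmonic and has vanishing radial mean. It satisfies two conditions:
\begin{enumerate}[label=\rm(\alph*)]
\item the equality condition at the outer sphere forces its tangential/radial boundary behaviour to satisfy a Robin-type relation at $t=1$;
\item at $t=r$, $v$ has a vanishing first-order defect.
\end{enumerate}
We expand $v$ in spherical harmonics of degree $k$. Each mode is a combination $a_kt^k+b_kt^{2-n-k}$. The Dirichlet-to-Neumann map of the annulus on degree-$k$ modes, $k\ge1$, has a spectral gap strictly exceeding the degree-$1$ radial rate. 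Feeding this into the equality conditions kills every mode, so $v\equiv0$, which is \eqref{eq:critical-map}.

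In the end-reversing case the critical endpoint is $t=1$, where $(H^-_{n,r})'(1)=0$, and no boundary regularity is assumed. So we first prove endpoint H\"older regularity of $h$ near $|x|=1$. The ingredients are harmonicity together with the fact that $|h|\to1$ uniformly on approach to the outer sphere. This yields uniform convergence of $\phi_t$ to a limiting direction map. The limit must equal $Q\omega$ by the coupling-graph classification. Then $h-H^-_{n,r}(t)Q\omega$ is harmonic, and it is tied to the profile by two facts: its trace at $t=1$ vanishes, and the defect bound at the inner end holds. Maximum-principle and mode arguments then show that it vanishes identically.

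The main obstacle is the passage from the second-order endpoint defect to equality for the limiting kernel without any boundary continuity. This requires uniform control of $\pi_t$ and of the barycenter concavity constants as $t$ approaches the critical endpoint. In the end-reversing case it additionally requires the H\"older step, so that the limiting kernel genuinely describes the boundary trace.
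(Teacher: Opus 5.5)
Your outline follows the paper's architecture, but the steps that actually determine the two traces are either missing or replaced by arguments that do not work.

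\textbf{End-preserving case.} (First note that $R=\Rplus(r)$ already excludes end reversal, because $\Rminus(r)<\Rplus(r)$.) The second-order expansion gives only $D_n(t)=O((t-r)^2)$, not $o((t-r)^2)$. What is actually used is $D_n(t)/(t-r)\to0$, measured against the first-order vanishing of $B_t$. Dividing $0\le b_1(t)-I_B(t)\le D_n(t)$ by $t-r$ and passing to the limit yields $\ip{\psi}{\Tcal\Ncal_\gamma}=\tau_1$. The limit passage uses weak compactness of $\gamma_t$, $C^m$ convergence of $K_{B,t}/(t-r)$ to a kernel $K_T$, and a Hopf-lemma proof that $K_T$ is positive and strictly increasing; no uniform concavity constants are needed. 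Once $\gamma=(x,Mx)_\#\mu$, one gets $\Tcal\Ncal_\gamma(z)=\tau_1M^Tz$, and $|\psi|=1$ forces $\psi=M^Tz$. So the limit at the inner end locks the \emph{outer} Dirichlet trace.

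A weak limit of couplings is not a ``limiting direction map.'' You do not establish uniform convergence of $q_t$ as $t\downarrow r$, and without it the graph relation $\omega=Mx$ says nothing about $\phi$. After normalization, $v=\Rstar A_t(\phi-\omega)$ has zero outer trace, so there is no Robin relation at $t=1$. Your claim that $v$ has vanishing radial mean is unjustified: its constant mode $\Rstar a_0(t)\Pi_0\phi$ is unknown. This is precisely the dangerous mode, because $\lambda_0<\lambda_1$, so no gap stated for degrees $k\ge1$ can remove it. The paper derives the one-sided bound $\sum_\ell\lambda_\ell E_\ell\le\lambda_1 c$, with $c=\ip{\phi}{\omega}$, from $|h|>\Rstar$ (square, integrate, divide by $t-r$, apply Fatou). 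It then needs the specific inequality $\lambda_1<2\lambda_0$, together with $E_1\ge c^2$ and $\sum_\ell E_\ell=1$, to rule out mixtures of the constant and degree-one modes and force $c=1$.

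\textbf{End-reversing case.} Here the roles flip. The same pairing that produces equality in the limiting contraction as $t\uparrow1$, namely $\ip{\phi}{\mathcal S\Ncal_\gamma}=\sigma_1$, is what locks the \emph{inner} trace, giving $\phi=M^Tz$. You never extract this. Instead you invoke ``the defect bound at the inner end'' plus unspecified maximum-principle and mode arguments. But nothing is critical at $t=r$, since $D_n^-(r)=1-R>0$. The only constraint available there, $|h|<1$, bounds the Dirichlet-to-Neumann energy of $\phi$ from below rather than above, so $\phi$ is left undetermined.

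Your H\"older step is also misstated. In this branch $|h|\to R$, not $1$, at the outer source end. Moreover, uniform convergence of $|h|$ without a rate would only give vanishing mean oscillation of $\psi$ in a Campanato argument, not continuity. What is needed is the linear rate $0\le 1-|m_s|\le Cs$ for $m_s=B_{1-s}\psi/b_0(1-s)$. This follows from $R<|h|\le a_0(t)+R\,b_0(t)|m_s|$ with $a_0(t)=O(1-t)$. Combined with the kernel lower bound $p_s\ge c\,s^{-(n-1)}$ on geodesic balls of radius $s$, it gives the Campanato estimate and hence uniform convergence $q_{t_j}\to\psi$. With $\phi$ from the pairing and $\psi$ from the graph support of $\gamma$, the Poisson representation determines $h$ directly; no further maximum-principle argument is required.
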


By scaling we immediately obtain the following form.

\begin{corollary}\label{cor:unscaled}
Let $0<r_0<r_1$ and $0<R_0<R_1$. If there is a harmonic homeomorphism
$h:\A(r_0,r_1)\to\A(R_0,R_1)$, then
\[
 \frac{R_0}{R_1}\le
 \frac{n(r_0/r_1)}{n-1+(r_0/r_1)^n}.
\]
If equality holds, then for some $Q\in O(n)$,
\[
 h(x)=R_1H_{n,r_0/r_1}(|x|/r_1)Q\frac{x}{|x|}.
\]
If $h$ interchanges the ends, then
\[
 \frac{R_0}{R_1}\le
 \frac{n(r_0/r_1)^{n-1}}{1+(n-1)(r_0/r_1)^n}.
\]
Equality in this reversed bound holds only for
\[
 h(x)=R_1H_{n,r_0/r_1}^{-}(|x|/r_1)Q\frac{x}{|x|}
\]
with $Q\in O(n)$.
\end{corollary}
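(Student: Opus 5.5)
The corollary follows from Theorems~\ref{thm:sharp-bounds} and~\ref{thm:critical-rigidity} by the two scalings that normalize both annuli. Given a harmonic homeomorphism $h:\A(r_0,r_1)\to\A(R_0,R_1)$, I will set $r=r_0/r_1$ and $R=R_0/R_1$, which lie in $(0,1)$, and define
\[
 \tilde h(y)=\frac{1}{R_1}\,h(r_1y),\qquad y\in\A(r,1).
\]
The map $y\mapsto r_1y$ is a homeomorphism of $\A(r,1)$ onto $\A(r_0,r_1)$. The map $z\mapsto z/R_1$ is a homeomorphism of $\A(R_0,R_1)$ onto $\A(R,1)$. Hence $\tilde h$ is a homeomorphism of $\A(r,1)$ onto $\A(R,1)$.

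Each coordinate of $\tilde h$ is harmonic, because the Laplace equation is invariant under dilation of the variable, since $\Delta[u(r_1\cdot)]=r_1^2(\Delta u)(r_1\cdot)$, and under multiplication by a constant. Both scalings are radial dilations, so they preserve which boundary sphere is inner and which is outer. Thus $h$ interchanges the ends exactly when $\tilde h$ does.

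Applying Theorem~\ref{thm:sharp-bounds}(i) to $\tilde h$ gives
\[
 R_0/R_1=R\le \Rplus(r)=\frac{n(r_0/r_1)}{n-1+(r_0/r_1)^n},
\]
which is the first inequality. If $h$ interchanges the ends, then Theorem~\ref{thm:sharp-bounds}(ii) applied to $\tilde h$ gives $R\le \Rminus(r)$, which is the reversed bound.

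For the equality cases, suppose first that $R=\Rplus(r)$. Theorem~\ref{thm:critical-rigidity}(i) provides $Q\in O(n)$ with $\tilde h(t,\omega)=H_{n,r}(t)Q\omega$. For $x\in\A(r_0,r_1)$, put $y=x/r_1$, so that $|y|=|x|/r_1$ and $\omega=y/|y|=x/|x|$. Undoing the scaling gives
\[
 h(x)=R_1\tilde h(x/r_1)=R_1H_{n,r_0/r_1}(|x|/r_1)\,Q\frac{x}{|x|}.
\]
The end-reversing case with $R=\Rminus(r)$ is identical, using Theorem~\ref{thm:critical-rigidity}(ii) and $H^{-}_{n,r}$ in place of $H_{n,r}$.

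The word ``only'' in the reversed equality statement requires nothing further. By Theorem~\ref{thm:critical-rigidity}(ii), every end-reversing extremal map has the displayed form. Conversely, that form does attain equality, since it is the rescaled radial critical map described in the introduction.

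No step here is a real obstacle. The only points needing care are that harmonicity survives the domain dilation and that the end correspondence is preserved by the scalings, and both are immediate. All the substance is contained in the two theorems already stated.
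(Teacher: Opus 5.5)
Your proposal is correct and follows the paper's own route: the paper derives the corollary in one line, ``by scaling.'' Your normalization $\tilde h(y)=h(r_1y)/R_1$ does exactly this, together with your checks that harmonicity, surjectivity, and the end correspondence survive the two dilations, followed by applying Theorems~\ref{thm:sharp-bounds} and~\ref{thm:critical-rigidity} to $\tilde h$ and undoing the scaling.
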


\subsection*{Geometric meaning and higher-dimensional phenomena}
Theorems~\ref{thm:sharp-bounds} and \ref{thm:critical-rigidity} have two
complementary meanings.  The inequalities are sharp existence obstructions:
they measure exactly how far a harmonic homeomorphism can deform the thickness
of a spherical annulus before global injectivity becomes impossible.  The two
equality statements are geometric rigidity theorems: when either obstruction
is saturated, all nonradial freedom disappears.  Thus the radial calculation
does not merely solve the radial problem; in each topological branch it
selects the unique extremizer in the unrestricted class, up to an orthogonal
transformation.

The two branches also reveal genuinely higher-dimensional behavior.  For
$n\ge3$,
\[
 \frac{nr^{n-1}}{1+(n-1)r^n}
 <
 \frac{nr}{n-1+r^n},
\]
so interchanging the ends imposes a strictly stronger obstruction.  In two
dimensions the numerical values coincide, although the end-preserving and
end-reversing radial equality branches remain distinct.  The asymmetry in
higher dimensions reflects the different roles of the two boundary
components for the power-law radial modes $1$ and $t^{2-n}$.

The critical mechanisms are likewise asymmetric.  In the end-preserving
branch, equality first identifies the outer trace and an inner
Dirichlet-to-Neumann spectral inequality eliminates the remaining constant and
higher spherical modes.  In the end-reversing branch, the limiting transfer
kernel first identifies the inner trace.  The second trace is then recovered
from an automatic $C^{0,1/2}$ endpoint regularity estimate, which yields
uniform convergence of the finite-slice direction maps and preserves the
graph support of the limiting coupling.  The two arguments share the same
coupling rigidity but close through different endpoint structures.

We emphasize that no continuous extension of $h$ to the closed annulus is
assumed.  The proof uses only measurable boundary traces of bounded harmonic
functions and never invokes a boundary homeomorphism, a boundary Jacobian, or
a sign condition on the Jacobian.  This is important in higher dimensions,
where harmonic extension does not preserve injectivity in general.

\subsection*{The constraint-map vortex and the critical Nitsche profile}
The exterior-ball constraint problem belongs to the broader theory of
elliptic free-boundary problems.  For classical background, see Lin
\cite{LinFreeBoundaryLectures}; for the modern constraint-map framework and
its open problems, see Figalli, Guerra, Kim, and Shahgholian
\cite{FigalliGuerraKimShahgholianSurvey}.  In the vector-valued setting
considered here, one minimizes the Dirichlet energy under a pointwise
restriction on the image.  The interaction of this target
obstacle with nontrivial boundary topology produces mapping singularities,
coincidence sets, and free boundaries
\cite{FigalliKimShahgholian,FigalliGuerraKimShahgholian}.  The relation to the
Nitsche problem is not merely conceptual: the canonical radial vortex is
obtained by gluing a sphere-valued contact core to an exterior harmonic phase
that is exactly the end-preserving critical Nitsche profile.

More precisely, consider
\[
 m_n(a)=
 \inf_{\substack{u\in W^{1,2}(B_1;\mathbb R^n),\ 
                   \operatorname{Tr}u=x\ \mathrm{on}\ \partial B_1\\
                   |u|\ge a\ \mathrm{a.e.\ in}\ B_1}}
 \int_{B_1}|Du|^2,
 \qquad 0<a<1.
\]
Let $\rho_a\in(0,1)$ be the unique number satisfying
\[
 a=\Rplus(\rho_a)=\frac{n\rho_a}{n-1+\rho_a^n}.
\]
The canonical radial constraint-map vortex is
\[
 u_{a,n}(x)=
 \begin{cases}
  \displaystyle a\frac{x}{|x|},
     &0<|x|\le \rho_a,\\[6pt]
  \displaystyle H_{n,\rho_a}(|x|)\frac{x}{|x|},
     &\rho_a<|x|\le1,
 \end{cases}
\]
where $H_{n,\rho_a}$ is exactly the profile introduced above
\cite{DengLiLiuMa}.  Consequently, the restriction of $u_{a,n}$ to its
noncontact region is precisely the critical end-preserving Nitsche map
\[
 \A(\rho_a,1)\longrightarrow\A(a,1).
\]
The identification is exact at the level of both the differential equation
and the interface data: the exterior profile satisfies the degree-one radial
Laplace equation, $H_{n,\rho_a}(1)=1$, and
\[
 H_{n,\rho_a}(\rho_a)=a,
 \qquad H_{n,\rho_a}'(\rho_a)=0.
\]
Thus the smooth-pasting condition at the free boundary is the critical
Nitsche condition, the contact radius is the inner radius of the source
annulus, and the obstacle radius is the inner radius of the target annulus.
The inner constant-modulus branch fills the missing ball by the
sphere-valued vortex.

Conversely, suppose a constraint map has a spherical contact core
$\overline B_\rho$, identity data on $\partial B_1$, and an exterior free
phase that is an end-preserving harmonic homeomorphism
\[
 h:\A(\rho,1)\longrightarrow\A(a,1).
\]
Theorem~\ref{thm:sharp-bounds} gives $a\le\Rplus(\rho)$.  Since
$\Rplus$ is strictly increasing on $(0,1)$, it follows that
$\rho\ge\rho_a$.  If $\rho=\rho_a$, then
Theorem~\ref{thm:critical-rigidity} yields
$h(t,\omega)=H_{n,\rho_a}(t)Q\omega$, and the identity outer boundary data
force $Q=I$.  Thus, within this spherical homeomorphic free-phase class, the
Nitsche inequality is exactly a sharp lower bound for the contact radius,
and equality gives simultaneous rigidity of the free boundary and the
exterior harmonic phase.

Only the end-preserving branch occurs in this full-ball constraint model,
because the harmonic phase lies outside an inner contact core.  The
end-reversing critical profile is instead the endpoint-dual solution of the
same radial equation, with the zero normal derivative imposed at the outer
rather than the inner endpoint.  The two critical values coincide in dimension
two, but separate strictly as $\Rminus(r)<\Rplus(r)$ for $n\ge3$.

The common radial profile makes the relation between the two rigidity
problems precise, although their global admissible classes and proofs are
different.  For the constraint-map vortex, global minimality and equality
rigidity select the full map---contact core, singularity, free boundary, and
exterior harmonic phase---among all maps satisfying the pointwise target
constraint \cite{DengLiLiuMa}.  In the present problem there is no target
obstacle or imposed variational minimization; instead, sharp modulus bounds
and their equality cases select the same exterior harmonic phase among all
harmonic homeomorphisms.  In one problem topology is carried by a degree-one
defect coupled to a contact set, while in the other it enters through global
nonfolding and the degree of finite-slice direction maps.  Both are instances
of \emph{topologically constrained elliptic rigidity}: a topological
constraint excludes every nonsymmetric competitor and forces a canonical
radial configuration.

\subsection*{Comparison with the planar proof}
The present proof is not a spherical-harmonic transcription of the planar
argument of Iwaniec--Kovalev--Onninen.  Their solution uses structures special
to dimension two: circular means, complex derivatives and Jacobian identities,
Fourier--Laurent expansions, and sharp positivity of modewise quadratic forms
\cite{IKO2010,IKO}.  The critical radial solution determines a differential
operator for the circular mean, while the Fourier analysis ultimately singles
out the first angular mode and eliminates all others.

Our sharp-bound argument replaces this planar package by a geometric coupling
principle.  On a finite sphere $\{t\}\times\Sph$, consider the direction map
\[
 q_t(\omega)=\frac{h(t,\omega)}{|h(t,\omega)|}.
\]
The homeomorphism property implies that $q_t$ has degree $\pm1$, hence is onto.
A Borel right inverse then produces a probability coupling between two copies
of $\Sph$.  For every positive zonal kernel $K$ that is nondecreasing in the
angular variable, the associated vector measure satisfies a sharp contraction
inequality
\[
 \|K\Ncal_\gamma\|_{L^1(\Sph)}\le \kappa_1,
\]
where $\kappa_1$ is the degree-one spherical-harmonic multiplier of $K$.  The
proof uses a layer-cake decomposition and the strict concavity of the
barycenter length of a spherical cap.  Thus all angular complexity is handled
at once, before any mode-by-mode analysis enters, and the sharp constants are
again selected by the degree-one mode.

The two approaches therefore share a deeper core: the radial critical maps
and the first angular mode determine the extremal values.  The difference lies
in how arbitrary nonradial behavior is controlled.  The planar proof uses
complex and Fourier structure, whereas the present proof passes from
topological degree to a probability coupling and then to a sharp inequality
for positive elliptic kernels.  The coupling contraction itself has a
two-dimensional analogue, with spherical-cap profile
$c_2(a)=\sin(\pi a)/\pi$.  The final equality analysis is nevertheless
different: the planar zero mode is spanned by $1$ and $\log t$, whereas in
higher dimensions the two power-law modes create both the strict separation
of the two sharp constants and the endpoint mechanisms used here.

\subsection*{Ideas of the proof and organization}
The proof naturally separates into a common sharp-inequality argument and two
critical-rigidity arguments.  We first show that a homeomorphism of open
annuli permutes the two ends and that its modulus converges uniformly at each
end.  Since the coordinate functions are bounded and harmonic,
spherical-harmonic expansion yields measurable sphere-valued $L^2$ traces
$\phi$ and $\psi$.  The map is represented through the two annular Poisson
operators $A_t$ and $B_t$.

For each interior slice, the degree argument described above produces a
coupling $\gamma_t$.  Applying the spherical-cap contraction to the positive,
strictly increasing zonal kernels of $A_t$ and $B_t$ gives, in the
end-preserving case,
\[
 R<Ra_1(t)+b_1(t),
\]
and, in the end-reversing case,
\[
 R<a_1(t)+Rb_1(t),
\]
where $a_1(t)$ and $b_1(t)$ are the degree-one Poisson multipliers.  Taking the
appropriate one-sided derivative at the relevant endpoint gives the two sharp
bounds.

At either critical value, the corresponding scalar defect has a second-order
zero at the relevant endpoint, whereas the Poisson operator carrying data
from the opposite boundary vanishes only to first order.  Rescaling therefore
produces a smooth positive transfer kernel.  The second-order defect forces
equality in the limiting coupling contraction.  We prove a common equality
classification for strictly increasing zonal kernels: every equality coupling
is the graph of an orthogonal transformation.

For the end-preserving critical map, the outer-to-inner transfer kernel locks
the outer measurable trace.  After orthogonal normalization, the inner trace
is controlled by the inner Dirichlet-to-Neumann spectrum.  Strict monotonicity
of its spherical-harmonic eigenvalues, together with
\[
 \lambda_0<\lambda_1<2\lambda_0,
\]
eliminates the constant and all higher modes.  For the end-reversing critical
map, the inner-to-outer transfer kernel locks the inner trace.  A quantitative
variance estimate for the outer Poisson kernel then yields a
$C^{0,1/2}$ representative of the remaining trace and uniform convergence
$q_t\to\psi$ at the outer source end.  This retains the graph support under
weak convergence and locks the second trace.  Harmonic uniqueness yields the
corresponding radial map in both cases.

Section~\ref{sec:boundary-reduction} identifies the radial extremals and
constructs the measurable boundary traces.  Section~\ref{sec:coupling}
establishes the spherical-cap contraction for annular Poisson kernels.
Section~\ref{sec:sharp-bounds} combines that contraction with finite-slice
degree to prove both sharp bounds.  Section~\ref{sec:critical-rigidity} first
classifies equality couplings and then treats the end-preserving and
end-reversing critical cases separately.

\section{Extremal radial maps and boundary reduction}
\label{sec:boundary-reduction}

\subsection{Extremal radial maps and sharpness}\label{sec:radial}
Consider a radial map
\[
 h(t,\omega)=H(t)Q\omega,\qquad Q\in O(n).
\]
The harmonicity of the coordinate functions is equivalent to
\[
 H''+\frac{n-1}{t}H'-\frac{n-1}{t^2}H=0,
\]
whose general solution is $H(t)=At+Bt^{1-n}$.

For an end-preserving critical map, impose $H(1)=1$ and $H'(r)=0$. This gives
\begin{equation}\label{eq:Hplus}
 H_+(t)=\frac{(n-1)t+r^nt^{1-n}}{n-1+r^n},
 \qquad
 H_+(r)=\Rplus(r),
\end{equation}
and
\[
 H_+'(t)=\frac{n-1}{n-1+r^n}\left(1-\frac{r^n}{t^n}\right)>0
 \quad(r<t<1).
\]
Thus $H_+(t)Q\omega$ is a harmonic homeomorphism from $\A(r,1)$ onto
$\A(\Rplus(r),1)$.

For an end-reversing critical map, impose $H(r)=1$ and $H'(1)=0$. This yields
\begin{equation}\label{eq:Hminus}
 H_-(t)=\frac{r^{n-1}\bigl((n-1)t+t^{1-n}\bigr)}{1+(n-1)r^n},
 \qquad
 H_-(1)=\Rminus(r),
\end{equation}
and
\[
 H_-'(t)=\frac{(n-1)r^{n-1}}{1+(n-1)r^n}(1-t^{-n})<0.
\]
Hence the constants in Theorem~\ref{thm:sharp-bounds} are sharp once the upper
bounds are established.

\subsection{Ends and measurable boundary traces}\label{sec:boundary-traces}
We next isolate the topological and boundary-regularity facts needed to
replace continuous boundary values by measurable traces.  No extension of
$h$ to the closed annulus is assumed.

\begin{lemma}\label{lem:ends}
Every homeomorphism $h:\A(r,1)\to\A(R,1)$ induces a permutation of the two ends. If it preserves the ends, then
\[
 \sup_{\omega\in\Sph}\bigl||h(t,\omega)|-R\bigr|\to0\quad(t\downarrow r),
 \qquad
 \sup_{\omega\in\Sph}\bigl||h(t,\omega)|-1\bigr|\to0\quad(t\uparrow1).
\]
If it reverses the ends, the two limits are interchanged.
\end{lemma}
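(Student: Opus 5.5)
The plan is to use the proper-map (end) structure of the annulus, avoiding any boundary extension. Write the source ends as $E_r$ (approach to $|x|=r$) and $E_1$ (approach to $|x|=1$), and similarly $F_R$, $F_1$ for the target. Concretely, a sequence $x_k\in\A(r,1)$ \emph{tends to $E_r$} if $|x_k|\to r$. Since $h$ is a homeomorphism between open sets, it is proper: $x_k$ leaves every compact subset of $\A(r,1)$ if and only if $h(x_k)$ leaves every compact subset of $\A(R,1)$. In both annuli, leaving compacta means $\min(|x_k|-r,\,1-|x_k|)\to0$, respectively $\min(|h(x_k)|-R,\,1-|h(x_k)|)\to0$.

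The key step is to show that the end approached is consistent. For $\delta>0$ small, the collar $C_\delta=\{r<|x|<r+\delta\}$ is connected, since $n\ge2$ and it is homeomorphic to $(0,\delta)\times\Sph$. Fix a compact shell $K=\{R+\epsilon\le|y|\le1-\epsilon\}$. By properness $h^{-1}(K)$ is compact, so for $\delta$ small $C_\delta\cap h^{-1}(K)=\emptyset$. Then $h(C_\delta)$ is a connected subset of $\{R<|y|<R+\epsilon\}\cup\{1-\epsilon<|y|<1\}$, a disjoint union of two open sets, and hence lies in exactly one of them. As $\epsilon\downarrow0$ with $\delta\downarrow0$, the choice of component stabilizes, because the collars are nested and nonempty. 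This defines where $E_r$ is sent, and the same argument applies to $E_1$.

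It remains to see that the two source ends go to different target ends. Apply the same argument to $h^{-1}$. If both source ends went to $F_R$, then the collar of $F_1$ would be mapped by $h^{-1}$ into one source collar, say that of $E_r$. Applying $h$ would then put part of the $F_1$ collar inside $h(C_\delta)\subset\{|y|<R+\epsilon\}$, which is a contradiction. So the assignment of ends is a bijection, that is, a permutation.

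The uniform limits follow directly from this construction. In the end-preserving case, for each $\epsilon$ there is $\delta$ with $h(C_\delta)\subset\{R<|y|<R+\epsilon\}$. This means
\[
\sup_{\omega\in\Sph}\bigl||h(t,\omega)|-R\bigr|<\epsilon\qquad\text{for } r<t<r+\delta,
\]
which is exactly the stated uniform convergence; the outer end is handled in the same way. In the end-reversing case the same reasoning interchanges $R$ and $1$. The only delicate point is the connectedness of the collars together with the disjointness of the two target collars for small $\epsilon$. Both are immediate from $n\ge2$ and $R<1$, so I expect no real obstacle here.
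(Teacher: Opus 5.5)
Your proposal is correct and follows essentially the same route as the paper: properness of $h$, a compact shell whose preimage misses a small connected collar, the forced choice between two disjoint target collars with stabilization as the collars shrink, and an application of the same construction to $h^{-1}$ to obtain a permutation. Your explicit contradiction argument for injectivity spells out the step the paper states in one line. It goes through once you fix $\delta$ small enough to work for both source collars before running the $h^{-1}$ argument, which is routine.
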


\begin{proof}
A homeomorphism between locally compact Hausdorff spaces is proper. Fix
$0<\varepsilon<(1-R)/2$ and set
\[
 K_\varepsilon=\{y:R+\varepsilon\le |y|\le1-\varepsilon\}.
\]
Properness makes $h^{-1}(K_\varepsilon)$ compact in $\A(r,1)$.  Hence, for
some $\delta>0$, the connected inner collar
\[
 U^r_\delta=\{x:r<|x|<r+\delta\}
\]
is disjoint from $h^{-1}(K_\varepsilon)$.  Therefore
\[
 h(U^r_\delta)\subset V^R_\varepsilon
 \quad\text{or}\quad
 h(U^r_\delta)\subset V^1_\varepsilon,
 \qquad
 \begin{cases}
 V^R_\varepsilon=\{R<|y|<R+\varepsilon\},\\
 V^1_\varepsilon=\{1-\varepsilon<|y|<1\}.
 \end{cases}
\]
The target collar is stable under shrinking.  Indeed, if two choices
$\delta_1,\delta_2$ work for the same $\varepsilon$, then
\[
 U^r_{\min\{\delta_1,\delta_2\}}
 \subset U^r_{\delta_1}\cap U^r_{\delta_2},
\]
so its nonempty image cannot lie in two disjoint target collars.  The same
argument, using a source collar that works for two values of
$\varepsilon$, shows compatibility as $\varepsilon\downarrow0$.  Thus each
source end determines one target end.  Applying this construction to
$h^{-1}$ shows that the induced map on the two ends is a permutation.

Finally, if the inner source end corresponds to the inner target end, then
for every $\varepsilon>0$ all sufficiently small inner collars are mapped
into $\{R<|y|<R+\varepsilon\}$.  Hence
\[
 \sup_{\omega\in\Sph}\bigl||h(t,\omega)|-R\bigr|\longrightarrow0
 \qquad(t\downarrow r).
\]
Applying the same collar argument to the outer source end, and then to the
end-reversing permutation, gives the remaining three limits in
Lemma~\ref{lem:ends}.
\end{proof}

Let $\mu$ denote normalized surface measure on $\Sph$.

\begin{lemma}\label{lem:traces}
Assume $n\ge3$, and let $u$ be bounded and harmonic on $\A(r,1)$.  Then
there are unique $g_r,g_1\in L^\infty(\Sph,\mu)$ such that
\[
 u(t,\cdot)\to g_r\quad\text{in }L^2\ (t\downarrow r),
 \qquad
 u(t,\cdot)\to g_1\quad\text{in }L^2\ (t\uparrow1).
\]
They satisfy
\[
 \|g_r\|_\infty,\|g_1\|_\infty\le\|u\|_\infty.
\]
The function $u$ is the unique bounded harmonic function with these
$L^2$ boundary traces.  We call it the annular Poisson extension and write
\[
 u=P_{\A(r,1)}[g_r,g_1].
\]
Define the two boundary propagation operators by
\[
 A_tg=P_{\A(r,1)}[g,0](t,\cdot),
 \qquad
 B_tg=P_{\A(r,1)}[0,g](t,\cdot).
\]
Their boundary traces are explicitly
\[
 \begin{array}{c|cc}
  &t\downarrow r&t\uparrow1\\ \hline
  A_tg&g&0\\
  B_tg&0&g
 \end{array}
 \qquad\text{in }L^2(\Sph),
\]
and therefore
\[
 u(t,\cdot)=A_tg_r+B_tg_1,
\]
so $A_t$ carries the inner trace and $B_t$ carries the outer trace.

For the map $h$ in Theorem~\ref{thm:sharp-bounds}, there exist measurable
$\phi,\psi\in L^\infty(\Sph;\Sph)$ such that
\begin{align}
 h(t,\cdot)&=R A_t\phi+B_t\psi &&\text{in the end-preserving case},\label{eq:Poisson-pres}\\
 h(t,\cdot)&=A_t\phi+R B_t\psi &&\text{in the end-reversing case}.\label{eq:Poisson-rev}
\end{align}
\end{lemma}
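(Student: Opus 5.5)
We expand $u$ in spherical harmonics on each slice. For fixed $t\in(r,1)$, write
\[
 u(t,\omega)=\sum_{k\ge0}\sum_{j}c_{k,j}(t)Y_{k,j}(\omega),
\]
with $\{Y_{k,j}\}$ an $L^2(\mu)$-orthonormal basis of degree-$k$ harmonics. Harmonicity forces each coefficient to solve the radial ODE
\[
 c''+\tfrac{n-1}{t}c'-\tfrac{k(k+n-2)}{t^2}c=0,
\]
so that $c_{k,j}(t)=\alpha_{k,j}t^k+\beta_{k,j}t^{2-n-k}$.

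\textbf{Existence of traces.} Boundedness gives $|c_{k,j}(t)|\le\|u\|_\infty$ uniformly in $t$. Since $t^k$ and $t^{2-n-k}$ are linearly independent on $[r,1]$, each $\alpha_{k,j},\beta_{k,j}$ is finite, and $c_{k,j}(t)$ extends continuously to $t=r$ and $t=1$. Set $g_r=\sum c_{k,j}(r)Y_{k,j}$ and $g_1=\sum c_{k,j}(1)Y_{k,j}$. To upgrade coefficientwise convergence to $L^2$ convergence we need a tail bound uniform near the endpoint, and we get it by exploiting harmonicity in the interior.

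Fix an interior radius $s$, say $s$ close to $1$ when treating the outer end. Then
\[
 c_{k,j}(t)=\frac{(t^k-\rho_k(t))\,c_{k,j}(s)+\ldots}{\ldots}
\]
is the two-point interpolation between the radii $s$ and $t'$. A cleaner route is to pass to weak limits. The family $u(t,\cdot)$ is bounded in $L^\infty\subset L^2$, so every sequence $t_m\uparrow1$ has a weak-$*$ convergent subsequence in $L^\infty$. Any such limit has coefficients $c_{k,j}(1)$, which pins the limit down and gives weak convergence of the whole family to $g_1$, together with $\|g_1\|_\infty\le\|u\|_\infty$ by weak-$*$ lower semicontinuity. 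Strong $L^2$ convergence then reduces to showing
\[
 \|u(t,\cdot)\|_2\to\|g_1\|_2 .
\]
This will come from a monotonicity or convexity property of $\sum_{j}|c_{k,j}(t)|^2$ in $t$ on a neighborhood of the endpoint, uniform in $k$.

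For large $k$ near $t=1$, the term $\alpha t^k$ increases, and the term $\beta t^{2-n-k}$ decreases but is controlled. Write each mode as $c(t)=c(1)\,p_k(t)+c(r)\,q_k(t)$, where
\[
 p_k(t)=\frac{t^k r^{2-n-k}-r^k t^{2-n-k}}{r^{2-n-k}-r^{k}}
\]
is the Dirichlet solution with value $1$ at $t=1$ and $0$ at $t=r$, and $q_k$ is the symmetric counterpart. Both satisfy $0\le p_k,q_k\le1$ by the maximum principle. Moreover $q_k(t)\le (r/t)^{k+n-2}$ near $1$, which is summably small and uniformly so in $k$ for $t$ away from $r$, while $p_k(t)\to1$ for each fixed $k$. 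Dominated convergence over the modes then gives
\[
 \|u(t,\cdot)-g_1\|_2^2
 \le 2\sum_{k,j}\Bigl(|c_{k,j}(1)|^2(1-p_k(t))^2+|c_{k,j}(r)|^2q_k(t)^2\Bigr)\to0 .
\]
This is dominated by $\sum|c_{k,j}(1)|^2+\sum|c_{k,j}(r)|^2<\infty$, since both traces lie in $L^\infty\subset L^2$ by the weak-limit step. The inner end is handled symmetrically.

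\textbf{Uniqueness and the operators $A_t$, $B_t$.} The same mode formula shows that a bounded harmonic $u$ is determined by $c_{k,j}(r)$ and $c_{k,j}(1)$, that is, by $(g_r,g_1)$. So $u=P_{\A(r,1)}[g_r,g_1]$, and the uniqueness claim holds. Define $A_tg=\sum g_{k,j}\,q_k(t)Y_{k,j}$ and $B_tg=\sum g_{k,j}\,p_k(t)Y_{k,j}$. The trace table follows from the convergence just proven, and $u(t,\cdot)=A_tg_r+B_tg_1$ follows by linearity. One point needs checking: that $A_tg$ and $B_tg$ are bounded harmonic functions for $g\in L^\infty$. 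They are, because each is given by a positive kernel, namely the harmonic measure of the annulus, so the maximum principle gives the bound $\|g\|_\infty$. I expect this positivity/boundedness identification, and the uniform tail control, to be the main technical care points.

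\textbf{Application to $h$.} Apply the above to each coordinate of $h$, which is bounded by $1$. In the end-preserving case, Lemma~\ref{lem:ends} gives $|h(t,\cdot)|\to R$ uniformly as $t\downarrow r$. The $L^2$ limit $g_r$ is then an a.e. limit along a subsequence, so $|g_r|=R$ a.e. Set $\phi=g_r/R$; similarly $|g_1|=1$ a.e., and we set $\psi=g_1$. Linearity gives \eqref{eq:Poisson-pres}. The end-reversing case is identical with the roles of $R$ and $1$ swapped, giving \eqref{eq:Poisson-rev}.
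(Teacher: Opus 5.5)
Your argument is correct and is essentially the paper's proof. It uses the mode-by-mode radial ODE and the two-point representation $c_{k,j}(t)=c_{k,j}(1)p_k(t)+c_{k,j}(r)q_k(t)$ with $0\le p_k,q_k\le1$ (your $p_k,q_k$ are the paper's $b_\ell,a_\ell$). It then uses dominated convergence over the modes, uniqueness from the two-point problem, and the a.e.-subsequence argument with Lemma~\ref{lem:ends} to get $|g_r|=R$ and $|g_1|=1$.

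The variations are minor.
\begin{enumerate}
\item You obtain $g_1\in L^\infty$ with $\|g_1\|_\infty\le\|u\|_\infty$ in one step from weak-$*$ compactness. The paper instead uses a finite-mode Parseval bound for $\ell^2$ membership and recovers the $L^\infty$ bound afterwards from an a.e.\ convergent subsequence.
\item You justify harmonicity and boundedness of $A_tg,B_tg$ by identifying the mode series with the positive harmonic-measure integral. That identification deserves one line: agreement for continuous $g$ via the classical Dirichlet problem, then density at fixed interior $t$. The paper proves harmonicity of the series directly from the exponential decay of $a_\ell,b_\ell$ and the polynomial growth of zonal harmonics.
\end{enumerate}

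Also delete the abandoned false starts in the write-up: the interpolation with ellipses and the remark about norm convergence via monotonicity.
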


\begin{proof}
Expand $u(t,\cdot)$ in spherical harmonics. If $Y_{\ell,m}$ is a spherical harmonic of degree $\ell$, then its coefficient $u_{\ell,m}(t)$ satisfies
\begin{equation}\label{eq:mode-ode}
 u_{\ell,m}''+\frac{n-1}{t}u_{\ell,m}'-
 \frac{\ell(\ell+n-2)}{t^2}u_{\ell,m}=0.
\end{equation}
For $n\ge3$, the two independent radial solutions are $t^\ell$ and
$t^{-(\ell+n-2)}$, including the case $\ell=0$.  Hence each coefficient has
finite limits $g_{r,\ell,m}$ and $g_{1,\ell,m}$ at $r$ and $1$.  If
$\mathcal I_L$ denotes the finite family of modes of degree at most $L$,
then Parseval and normalized surface measure give
\[
 \sum_{(\ell,m)\in\mathcal I_L}|g_{r,\ell,m}|^2
 =\lim_{t\downarrow r}
   \sum_{(\ell,m)\in\mathcal I_L}|u_{\ell,m}(t)|^2
 \le \sup_{r<t<1}\|u(t,\cdot)\|_2^2
 \le \|u\|_\infty^2.
\]
Letting $L\to\infty$ proves that the inner endpoint family belongs to
$\ell^2$; the identical argument at $t=1$ handles the outer family.  They
therefore define $g_r,g_1\in L^2(\Sph)$.

Let $a_\ell$ and $b_\ell$ solve \eqref{eq:mode-ode} with boundary values
\[
 \begin{array}{c|cc}
  &t=r&t=1\\ \hline
  a_\ell&1&0\\
  b_\ell&0&1.
 \end{array}
\]
Solving this two-point problem gives
\begin{align}
 a_\ell(t)&=
 \frac{r^{\ell+n-2}\bigl(t^{-(\ell+n-2)}-t^\ell\bigr)}
 {1-r^{2\ell+n-2}},\label{eq:aell}\\
 b_\ell(t)&=
 \frac{t^\ell-r^{2\ell+n-2}t^{-(\ell+n-2)}}
 {1-r^{2\ell+n-2}}.\label{eq:bell}
\end{align}

Define the candidate Poisson solution mode by mode:
\begin{equation}\label{eq:trace-series}
 v(t,\cdot)=\sum_{\ell,m}
 \bigl(a_\ell(t)g_{r,\ell,m}+b_\ell(t)g_{1,\ell,m}\bigr)Y_{\ell,m}.
\end{equation}
On every compact interval
$I_\varepsilon=[r+\varepsilon,1-\varepsilon]$, the formulas
\eqref{eq:aell}--\eqref{eq:bell} give, for every $k\ge0$,
\[
 \sup_{t\in I_\varepsilon}
 |\partial_t^ka_\ell(t)|+|\partial_t^kb_\ell(t)|
 \le C_{\varepsilon,k}(1+\ell)^k\rho_\varepsilon^\ell.
\]
The addition formula gives only polynomial growth in $\ell$ for the
$C^m$ norm of the degree-$\ell$ reproducing kernel.  Cauchy--Schwarz and
the exponential factor $\rho_\varepsilon^\ell$ therefore imply local
$C^\infty$ convergence of \eqref{eq:trace-series}.  Hence $v$ is harmonic.
For each $(\ell,m)$, the coefficients of $u$ and $v$ solve
\eqref{eq:mode-ode} with the same endpoint values
$g_{r,\ell,m}$ and $g_{1,\ell,m}$.  Uniqueness of this two-point
boundary-value problem gives
\[
 u_{\ell,m}=v_{\ell,m}.
\]
Completeness gives $u=v$ in $L^2$ on every interior sphere, and continuity
then gives pointwise equality.

The explicit formulas \eqref{eq:aell}--\eqref{eq:bell} show that, for
every $\ell$,
\[
 a_\ell(r)=1,\quad a_\ell(1)=0,\quad
 b_\ell(r)=0,\quad b_\ell(1)=1,
 \qquad 0\le a_\ell(t),b_\ell(t)\le1.
\]
Consequently
\[
 \|A_tg_r-g_r\|_2^2
 =\sum_{\ell,m}|a_\ell(t)-1|^2|g_{r,\ell,m}|^2\to0,
 \qquad
 \|B_tg_1\|_2^2
 =\sum_{\ell,m}b_\ell(t)^2|g_{1,\ell,m}|^2\to0
\]
as $t\downarrow r$, by dominated convergence for series.  The limit at
$t\uparrow1$ is identical.  This proves the asserted strong $L^2$ traces.

To recover the endpoint $L^\infty$ bounds, take an arbitrary sequence
$t_j\downarrow r$.  Strong $L^2$ convergence has a subsequence converging
to $g_r$ almost everywhere.  Since $|u(t_j,\omega)|\le\|u\|_\infty$, it
follows that $|g_r|\le\|u\|_\infty$ almost everywhere.  The outer endpoint
is identical.  Strong convergence also proves uniqueness of both traces.

Apply this coordinatewise to $h$. In the end-preserving case, Lemma~\ref{lem:ends} gives
$|g_r|=R$ and $|g_1|=1$ almost everywhere: for example,
\[
 \bigl\||h(t,\cdot)|-R\bigr\|_{L^2}\to0
\]
and $h(t,\cdot)\to g_r$ in $L^2$, whence $|g_r|=R$ a.e. Define $\phi=g_r/R$ and $\psi=g_1$. The reversed case is the same with the radii interchanged. The traces are essentially bounded because they are sphere-valued.
\end{proof}

For later use, the degree-one multipliers are
\begin{equation}\label{eq:first-multipliers}
 a_1(t)=\frac{r^{n-1}(t^{1-n}-t)}{1-r^n},
 \qquad
 b_1(t)=\frac{t-r^nt^{1-n}}{1-r^n}.
\end{equation}
Their endpoint derivatives are
\begin{align}
 a_1'(r)&=-\frac{n-1+r^n}{r(1-r^n)}, &
 b_1'(r)&=\frac{n}{1-r^n},\label{eq:inner-derivatives}\\
 a_1'(1)&=-\frac{nr^{n-1}}{1-r^n}, &
 b_1'(1)&=\frac{1+(n-1)r^n}{1-r^n}.\label{eq:outer-derivatives}
\end{align}

\section{The spherical-cap coupling contraction}\label{sec:coupling}
The proof of the sharp bounds requires a contraction estimate for vector
measures.  We establish it in three steps: monotonicity of the annular
Poisson kernels, concavity of spherical-cap barycenters, and a layer-cake
argument.

\subsection{Annular Poisson kernels}\label{sec:poisson}
For $r<t<1$, both $A_t$ and $B_t$ have positive zonal kernels.  Their
strict angular monotonicity supplies the ordered level sets used in
Proposition~\ref{prop:coupling}.

\begin{lemma}\label{lem:kernel-monotone}
For every fixed $r<t<1$, there are positive functions $K_A(t,\cdot)$ and $K_B(t,\cdot)$ on $[-1,1]$ such that
\[
 A_tf(\omega)=\int_{\Sph}K_A(t,\omega\cdot\xi)f(\xi)\dd\mu(\xi),
\]
with the analogous formula for $B_t$, and both kernels are strictly increasing in the scalar product.
\end{lemma}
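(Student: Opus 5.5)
The plan is to realize $K_A(t,\cdot)$ and $K_B(t,\cdot)$ as Poisson kernels of the annulus and to derive both positivity and monotonicity from the maximum principle, using rotational symmetry to reduce to a two-variable problem.

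\emph{Step 1: the kernels.} By Lemma~\ref{lem:traces}, $A_t$ acts on the degree-$\ell$ component by multiplication by $a_\ell(t)$. The addition formula for zonal harmonics therefore gives the formal kernel
\[
 K_A(t,s)=\sum_{\ell\ge0}a_\ell(t)\,N_\ell\,P_\ell(s),
\]
where $N_\ell$ is the dimension of the space of degree-$\ell$ spherical harmonics and $P_\ell$ is the normalized Legendre (Gegenbauer) polynomial. For fixed interior $t$, formula \eqref{eq:aell} gives $a_\ell(t)\le C\rho^\ell$ with $\rho<1$, while $N_\ell$ grows only polynomially. Hence the series converges in $C^\infty([-1,1])$, and $K_A(t,\cdot)$ is a smooth function representing $A_t$. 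The same construction applies to $K_B$ with $b_\ell$ from \eqref{eq:bell}.

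\emph{Step 2: positivity.} Fix a pole $e\in\Sph$ and set $G(x)=K_A(|x|,\,x\cdot e/|x|)$. Then $G$ is the harmonic function in $\A(r,1)$ whose inner trace is the delta measure at $re$ and whose outer trace is $0$, so it is the normal derivative of the annular Green function at $re$. Positivity of $A_t$ on nonnegative data follows from the maximum principle, since $0\le g$ implies $0\le P[g,0]$; this gives $K_A\ge0$. Strict positivity then follows from the strong maximum principle, because $G$ is nonnegative, harmonic, and not identically zero. For $B_t$ the argument is identical.

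\emph{Step 3: strict monotonicity in $s$.} This is the main obstacle. I would use the reflection (moving-plane) argument on the sphere. Take any hyperplane $\Pi$ through the origin that does not contain $e$, let $\sigma$ be the reflection across $\Pi$, and let $D$ be the half-annulus on the side of $\Pi$ containing $e$. Set $w=G-G\circ\sigma$ on $D$. Then $w$ is harmonic in $D$ and vanishes on $\Pi\cap\A(r,1)$. Its inner boundary trace is $\delta_{re}-\delta_{r\sigma e}$ restricted to $D$, which is a positive measure there, and its outer trace is $0$. Hence $w\ge0$ by the maximum principle applied to its Poisson representation. To make this rigorous with measure data, I would approximate the delta measure by smooth nonnegative bumps concentrated in a small cap around $e$, inside $D$; for each bump, $w$ has nonnegative continuous boundary data on $\partial D$ and the maximum principle applies directly. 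The strong maximum principle then gives $w>0$ in $D$. Every pair of directions $\omega,\omega'$ with $\omega\cdot e>\omega'\cdot e$ is exchanged by such a reflection, taken across the bisecting hyperplane of $\omega$ and $\omega'$, which separates them and leaves $e$ on the side of $\omega$. Therefore $K_A(t,\omega\cdot e)>K_A(t,\omega'\cdot e)$, which is strict monotonicity. The outer kernel $K_B$ is handled the same way, with the delta placed at $e$ on the outer sphere.

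The delicate point is making the reflection argument rigorous for the singular boundary data. I expect to handle it by first working with mollified delta data, for which the classical strong maximum principle applies, and obtaining nonstrict monotonicity in the limit. Strictness then comes from the smooth limit kernel $G$ itself: the difference $w$ is a positive harmonic function in $D$ (not identically zero, since $G$ is not $\sigma$-symmetric), so it is strictly positive in the interior.
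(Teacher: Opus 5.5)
Your proof is correct. Its core is the same reflection across the bisecting hyperplane that the paper uses, but you apply it to the other variable of the Poisson kernel.

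\textbf{The paper's route.} The paper starts from the Dirichlet Green function $G_D$ and fixes the interior point $y=t\omega$. It compares $G_D(y,\cdot)$ with $G_D(Sy,\cdot)$ on the half-annulus $D\cap H$. This difference has zero boundary values and a positive interior singularity, so the ordinary maximum principle makes it positive. The Hopf lemma at the smooth boundary point $b\xi_1$ then turns this into $p_D(y,b\xi_1)>p_D(y,b\xi_2)$. Hopf is also what gives $p_D>0$ in the first place.

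\textbf{Your route.} You fix the boundary pole ($re$ or $e$) and reflect the interior evaluation point. Your $w=G-G\circ\sigma$ therefore carries a nonnegative boundary measure rather than an interior pole. That costs you the mollification step. In exchange, strictness follows from the interior strong maximum principle, with no Hopf lemma or normal derivative.

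Your multiplier-series definition of $K_A$ and $K_B$ also gives you something the paper leaves implicit. It identifies the kernels directly with $A_t$ and $B_t$ as defined through $a_\ell$ and $b_\ell$ in Lemma~\ref{lem:traces}. It also supplies the angular continuity of $K_A(t,\cdot)$ that your passage from mollified data to $G$ needs.

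\textbf{One point to write out.} You should justify why $w\not\equiv0$, rather than just asserting that $G$ is not $\sigma$-symmetric. Take a smooth bump $g\ge0$ supported near $e$ on the $e$-side of $\Pi$. Then
\[
\int w(t\omega)g(\omega)\dd\mu(\omega)=A_tg(e)-A_tg(\sigma e)\to g(e)>0
\qquad (t\downarrow r),
\]
because $A_tg\to g$ uniformly for smooth $g$. For $B_t$ the same computation works with $t\uparrow1$.
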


\begin{proof}
We prove first that each boundary operator has a positive zonal kernel and
then compare two boundary points by reflection.  Let $D=\A(r,1)$, and let
$G_D(y,z)$ be the positive Dirichlet Green function normalized by
$-\Delta_zG_D(y,z)=\delta_y$.  Its Poisson density with respect to ordinary
surface measure is
\[
 p_D(y,\zeta)=-\partial_{\nu_D(\zeta)}G_D(y,\zeta)>0,
\]
by the Hopf boundary lemma.
We use here only the standard Green-function, maximum-principle, and local
Hopf theory on smooth bounded domains; see \cite[Chapters~2 and~6]{GT}.

Since $\dd\mu=\dd\sigma/|\Sph|$, the kernels in the statement are
\begin{equation}\label{eq:KA-KB-Poisson}
 \begin{aligned}
 K_A(t,\omega\cdot\xi)
  &=|\Sph|r^{n-1}p_D(t\omega,r\xi),\\
 K_B(t,\omega\cdot\xi)
  &=|\Sph|p_D(t\omega,\xi).
 \end{aligned}
\end{equation}
Rotational invariance makes them zonal, and the displayed positivity of
$p_D$ makes them positive.

Fix $y=t\omega$ and suppose
$\omega\cdot\xi_1>\omega\cdot\xi_2$.  Let $S$ be reflection across
\[
 \Pi=\{z:z\cdot(\xi_1-\xi_2)=0\},
 \qquad
 H=\{z:z\cdot(\xi_1-\xi_2)>0\}.
\]
Then $S\xi_1=\xi_2$, while $y,b\xi_1\in H$ and
$Sy\notin\overline H$ for $b\in\{r,1\}$.  Moreover, $S(D)=D$.  In
$D\cap H$ define
\[
 W(z)=G_D(y,z)-G_D(Sy,z).
\]
On the planar boundary $z\in\Pi$, reflection invariance and $Sz=z$ give
\[
 G_D(Sy,z)=G_D(y,Sz)=G_D(y,z),
\]
whereas on the spherical boundary both Green functions vanish by the
Dirichlet condition.  Thus $W=0$ on all of $\partial(D\cap H)$.  On
$(D\cap H)\setminus\overline B_\varepsilon(y)$, the function $W$ is
harmonic, and for small $\varepsilon$ its Green-function singularity makes
$W>0$ on $\partial B_\varepsilon(y)$.  The weak and strong maximum
principles therefore give
\[
 W>0\qquad\text{in }(D\cap H)\setminus\{y\}.
\]
The point $b\xi_1$ is a smooth boundary point away from the reflecting
plane and the pole.  The Hopf lemma yields
\[
 -\partial_{\nu_D}W(b\xi_1)>0.
\]
Using the reflection identity for $G_D(Sy,\cdot)$ gives
\[
 p_D(y,b\xi_1)-p_D(y,b\xi_2)>0.
\]
Reflection preserves the physical outward normal on either spherical
boundary component, including its sign at $b=r$.  Applying
\eqref{eq:KA-KB-Poisson} with $b=r$ and $b=1$ proves the strict increase of
$K_A$ and $K_B$, respectively.
\end{proof}

\subsection{Spherical caps and the coupling contraction}
Put
\[
 \beta_n=\frac{|\mathbb S^{n-2}|}{|\mathbb S^{n-1}|}.
\]
For $e\in\Sph$ and $-1\le s\le1$ let
\[
 C_s(e)=\{x\in\Sph:e\cdot x>s\}.
\]
Its normalized area and barycenter are
\begin{align}
 p_n(s)&=\beta_n\int_s^1(1-u^2)^{(n-3)/2}\dd u,\label{eq:pn}\\
 \int_{C_s(e)}x\dd\mu(x)&=
 \frac{\beta_n}{n-1}(1-s^2)^{(n-1)/2}e.\label{eq:capbar}
\end{align}
$p_n$ is a continuous strictly decreasing bijection from $[-1,1]$ onto
$[0,1]$.  Hence the following identity defines
$c_n:[0,1]\to[0,\infty)$ uniquely:
\begin{equation}\label{eq:cn-def}
 c_n(p_n(s))=\frac{\beta_n}{n-1}(1-s^2)^{(n-1)/2}.
\end{equation}

\begin{lemma}\label{lem:cap}
The function $c_n$ is continuous on $[0,1]$, vanishes at the endpoints, and is strictly concave on $(0,1)$. More precisely, if $a=p_n(s)$, then
\begin{equation}\label{eq:cn-derivatives}
 c_n'(a)=s,
 \qquad
 c_n''(a)=-\frac{1}{\beta_n(1-s^2)^{(n-3)/2}}<0.
\end{equation}
If $0\le q\le1$ and $\int q\dd\mu=a$, then
\begin{equation}\label{eq:bathtub}
 \left|\int_{\Sph}q(x)x\dd\mu(x)\right|\le c_n(a).
\end{equation}
If $0<a<1$ and equality holds with nonzero barycenter direction $e$, then
$q=\mathbf1_{C_s(e)}$ almost everywhere, where $p_n(s)=a$.
\end{lemma}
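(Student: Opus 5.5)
The plan is to treat $c_n$ as a function of $a$ through the parametrization $a=p_n(s)$, and to obtain \eqref{eq:bathtub} from a bathtub (rearrangement) argument.

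\textbf{Regularity and derivatives.} Define $C(s)=\frac{\beta_n}{n-1}(1-s^2)^{(n-1)/2}$, so that $c_n=C\circ p_n^{-1}$. Both $C$ and $p_n$ are continuous on $[-1,1]$. Since $p_n$ is a strictly decreasing bijection, $c_n$ is continuous on $[0,1]$. At the endpoints, $a=0$ corresponds to $s=1$ and $a=1$ to $s=-1$, and $C(\pm1)=0$.

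On $(-1,1)$ both functions are smooth, with
\[
 C'(s)=-\beta_n s(1-s^2)^{(n-3)/2},
 \qquad
 p_n'(s)=-\beta_n(1-s^2)^{(n-3)/2}\neq0.
\]
The chain rule gives $c_n'(a)=C'(s)/p_n'(s)=s$. Differentiating once more,
\[
 c_n''(a)=\frac{\dd s}{\dd a}=\frac{1}{p_n'(s)}=-\frac{1}{\beta_n(1-s^2)^{(n-3)/2}}<0 .
\]
Hence $c_n$ is strictly concave on $(0,1)$.

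\textbf{The inequality.} Fix $q$ with $0\le q\le1$ and $\int q\dd\mu=a$, and put $v=\int q(x)x\dd\mu$. If $v=0$ there is nothing to prove. Otherwise set $e=v/|v|$, so that $|v|=\int q(x)\,(e\cdot x)\dd\mu$. The cases $a\in\{0,1\}$ are trivial: for $a=0$ we have $q=0$ a.e., and for $a=1$ we have $q=1$ a.e. and the barycenter is $0$. So take $0<a<1$ and choose $s$ with $p_n(s)=a$. Write $\chi=\mathbf1_{C_s(e)}$; then $\int\chi\dd\mu=a$. The key pointwise estimate is
\[
 (q-\chi)(x)\,(e\cdot x-s)\le0 .
\]
Indeed, where $e\cdot x>s$ we have $\chi=1\ge q$, and where $e\cdot x\le s$ we have $\chi=0\le q$. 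Integrate this over $\Sph$ and use $\int(q-\chi)\dd\mu=0$ to remove the term involving $s$. This gives
\[
 \int q\,(e\cdot x)\dd\mu\le\int\chi\,(e\cdot x)\dd\mu .
\]
By \eqref{eq:capbar} the right-hand side equals $C(s)=c_n(a)$, which proves \eqref{eq:bathtub}.

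\textbf{Equality.} Suppose equality holds and the barycenter direction is $e$. Then the integral of the nonpositive function $(q-\chi)(e\cdot x-s)$ vanishes, so the integrand is $0$ a.e. The equator $\{e\cdot x=s\}$ has $\mu$-measure zero, and off this set $e\cdot x-s\neq0$, so $q=\chi$ a.e. Thus $q=\mathbf1_{C_s(e)}$ a.e.

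The only point needing care is the differentiation step. There one must check that $p_n'$ does not vanish on the open interval $(-1,1)$; this holds for all $n\ge3$, including $n=3$, where the weight $(1-s^2)^{(n-3)/2}$ is identically $1$.
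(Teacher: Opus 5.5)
Your proof is correct and follows the paper's argument essentially step for step. The derivatives come from differentiating the parametrization $s\mapsto(p_n(s),c_n(p_n(s)))$. Your pointwise inequality $(q-\chi)(e\cdot x-s)\le0$, integrated using $\int(q-\chi)\dd\mu=0$, is exactly the paper's comparison $\int_{C_s(e)}(1-q)(f-s)\dd\mu+\int_{\Sph\setminus C_s(e)}q(s-f)\dd\mu\ge0$, and the equality case uses the same null-level-set argument (your separate handling of $a\in\{0,1\}$ is harmless but not needed).
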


\begin{proof}
Differentiating \eqref{eq:pn} and \eqref{eq:cn-def} with respect to $s$ gives
\[
 p_n'(s)=-\beta_n(1-s^2)^{(n-3)/2}
\]
and
\[
 \frac{\dd}{\dd s}c_n(p_n(s))=-\beta_ns(1-s^2)^{(n-3)/2}.
\]
Division gives $c_n'(p_n(s))=s$, and a second differentiation gives \eqref{eq:cn-derivatives}.

For \eqref{eq:bathtub}, if the barycenter is zero there is nothing to prove.
Otherwise let $e$ be its unit direction and put $f(x)=e\cdot x$.  If
$a=p_n(s)$, then $\mu(C_s(e))=\int q\dd\mu=a$, and
\[
 \left|\int qx\dd\mu\right|=\int q(x)e\cdot x\dd\mu(x).
\]
The comparison with the cap is explicit:
\begin{align*}
 \int_{C_s(e)}f\dd\mu-\int qf\dd\mu
 &=\int_{C_s(e)}(1-q)(f-s)\dd\mu
   +\int_{\Sph\setminus C_s(e)}q(s-f)\dd\mu\\
 &\ge0.
\end{align*}
Together with \eqref{eq:capbar}, this proves \eqref{eq:bathtub}.  Since
$f-s$ is strictly positive on $C_s(e)$ and strictly negative off its
closure, while $\mu\{f=s\}=0$, equality forces
$q=\mathbf1_{C_s(e)}$ almost everywhere.
\end{proof}
The next proposition allows an arbitrary, possibly singular, second
marginal.  This is essential because the selector measure $m_t$ in
\eqref{eq:slice-identity} need not be absolutely continuous with respect to
$\mu$.

\begin{proposition}\label{prop:coupling}
Let $K:[-1,1]\to[0,\infty)$ be bounded, Borel, and nondecreasing, and define
\[
 (\Kcal f)(z)=\int_{\Sph}K(z\cdot\omega)f(\omega)\dd\mu(\omega).
\]
Let $\kappa_1$ be its degree-one spherical-harmonic multiplier, equivalently
\[
 \int_{\Sph}K(z\cdot\omega)\omega\dd\mu(\omega)=\kappa_1z.
\]
Let $\gamma$ be a probability measure on $\Sph_x\times\Sph_\omega$ whose first marginal is $\mu$, and define the vector measure
\[
 \Ncal_\gamma(E)=\int_{\Sph\times E}x\dd\gamma(x,\omega).
\]
The action of $\Kcal$ on this vector measure is
\[
 (\Kcal\Ncal_\gamma)(z)
 =\int_{\Sph}K(z\cdot\omega)\dd\Ncal_\gamma(\omega)
 =\int_{\Sph\times\Sph}K(z\cdot\omega)x\dd\gamma(x,\omega).
\]
Then
\begin{equation}\label{eq:coupling-contraction}
 \|\Kcal\Ncal_\gamma\|_{L^1(\mu)}\le\kappa_1.
\end{equation}
The constant is sharp.
\end{proposition}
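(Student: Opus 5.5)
The plan is to combine a layer-cake decomposition of $K$ with the cap inequality \eqref{eq:bathtub} and the concavity of $c_n$ from Lemma~\ref{lem:cap}. Write
\[
 K(s)=K(-1)+\int_{0}^{\infty}\mathbf 1_{\{K(s)-K(-1)>\lambda\}}\dd\lambda .
\]
Since $K$ is nondecreasing, each superlevel set $\{s:K(s)-K(-1)>\lambda\}$ is an interval $(\tau,1]$ or $[\tau,1]$ with $\tau=\tau(\lambda)$. In $z$ it is therefore the cap $C_\tau(\omega)$, up to the set $\{z\cdot\omega=\tau\}$. For each fixed $\omega$ this boundary set is $\mu$-null, so Fubini lets us ignore it.

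The constant part $K(-1)$ contributes $K(-1)\int x\dd\gamma=K(-1)\int x\dd\mu(x)=0$, because the first marginal of $\gamma$ is $\mu$. By Minkowski's inequality in $L^1(\mu)$, it then suffices to prove that for each $\tau$ the field
\[
 F_\tau(z)=\int_{\Sph\times\Sph}\mathbf 1_{\{z\cdot\omega>\tau\}}\,x\dd\gamma(x,\omega)
 \quad\text{satisfies}\quad
 \int_{\Sph}|F_\tau|\dd\mu\le c_n(p_n(\tau)).
\]
Granting this, we get $\|\Kcal\Ncal_\gamma\|_{L^1}\le\int_0^\infty c_n(p_n(\tau(\lambda)))\dd\lambda$.

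\textbf{Bounding $F_\tau$.} Disintegrate $\gamma=\int\delta_x\otimes\gamma_x\dd\mu(x)$ along its first marginal; this is a standard disintegration on the compact metric space $\Sph$. For fixed $z$, set $q_z(x)=\gamma_x(C_\tau(z))\in[0,1]$, which is Borel jointly in $(x,z)$. Then
\[
 F_\tau(z)=\int q_z(x)\,x\dd\mu(x),
 \qquad
 \int q_z\dd\mu=\nu(C_\tau(z)),
\]
where $\nu$ is the second marginal of $\gamma$. Inequality \eqref{eq:bathtub} gives $|F_\tau(z)|\le c_n(\nu(C_\tau(z)))$. By Fubini, $\int\nu(C_\tau(z))\dd\mu(z)=\int\mu(C_\tau(\omega))\dd\nu(\omega)=p_n(\tau)$. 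Concavity of $c_n$ and Jensen's inequality then give $\int|F_\tau|\dd\mu\le c_n(p_n(\tau))$. No absolute continuity of $\nu$ is needed anywhere, since $\nu$ only enters through the averaged cap masses.

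\textbf{Identifying the bound with $\kappa_1$.} Apply the same layer cake to $\kappa_1=\int K(z\cdot\omega)\,z\cdot\omega\dd\mu(\omega)$. The constant term vanishes again. The term at level $\lambda$ equals $\int_{C_\tau(z)}z\cdot\omega\dd\mu=c_n(p_n(\tau))$ by \eqref{eq:capbar} and \eqref{eq:cn-def}. Hence $\kappa_1=\int_0^\infty c_n(p_n(\tau(\lambda)))\dd\lambda$, which proves \eqref{eq:coupling-contraction}.

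\textbf{Sharpness.} Take $\gamma=(\mathrm{id},\mathrm{id})_\#\mu$, the diagonal coupling. Then $\Kcal\Ncal_\gamma(z)=\int K(z\cdot x)\,x\dd\mu(x)=\kappa_1 z$, whose $L^1$ norm is exactly $\kappa_1$.

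I expect the main technical obstacle to be bookkeeping rather than ideas. One must check that the layer-cake/Fubini interchange is legitimate when $K$ has jumps, where the cap may be open or closed, and that $(x,z)\mapsto q_z(x)$ is jointly measurable so that Jensen applies in $z$. The first point is handled by the $\mu$-nullity of $\{z\cdot\omega=\tau\}$ for each fixed $\omega$. The second follows from measurability of the disintegration kernel $x\mapsto\gamma_x$ composed with the Borel indicator of $\{z\cdot\omega>\tau\}$.
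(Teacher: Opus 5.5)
Your proposal is correct and follows essentially the same route as the paper: a layer-cake decomposition of $K$, disintegration of $\gamma$ along its first marginal, the bathtub bound $|F_\tau(z)|\le c_n(\nu(C_\tau(z)))$, Jensen's inequality for the concave $c_n$ using $\int\nu(C_\tau(z))\dd\mu(z)=p_n(\tau)$, and the same layer cake applied to the identity coupling to recover $\kappa_1$. The differences are cosmetic. You subtract $K(-1)$, which is killed by $\int x\dd\mu=0$, and you pass to open caps via a $\mu$-null boundary argument. The paper instead works directly with the superlevel sets $\{K(z\cdot\omega)>\alpha\}$, where the bathtub bound applies whether the cap is open or closed.
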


\begin{proof}
Disintegrate
\[
 \dd\gamma(x,\omega)=\dd\mu(x)\dd\pi_x(\omega),
\]
where $x\mapsto\pi_x$ is a Borel probability kernel; existence follows
from disintegration on standard Borel spaces
\cite[Chapter~8]{Kallenberg}. For $\alpha>0$ and $z\in\Sph$, set
\[
 C_{z,\alpha}=\{\omega:K(z\cdot\omega)>\alpha\},
 \qquad
 q_{z,\alpha}(x)=\pi_x(C_{z,\alpha}),
\]
\[
 a_z(\alpha)=\int q_{z,\alpha}\dd\mu,
 \qquad
 M_{z,\alpha}=\int q_{z,\alpha}(x)x\dd\mu(x).
\]
The set
$\{(z,\alpha,\omega):K(z\cdot\omega)>\alpha\}$ is Borel.  The standard
measurability theorem for integration against probability kernels therefore
makes $(x,z,\alpha)\mapsto q_{z,\alpha}(x)$ Borel, and Fubini gives joint
measurability of $a_z$ and $M_{z,\alpha}$.  By Lemma~\ref{lem:cap},
\[
 |M_{z,\alpha}|\le c_n(a_z(\alpha)).
\]
Let $p(\alpha)=\mu(C_{z,\alpha})$, independent of $z$ by rotational invariance. Fubini gives
\[
 \int_{\Sph}a_z(\alpha)\dd\mu(z)=p(\alpha).
\]
The layer-cake formula and Tonelli's theorem yield
\[
 \Kcal\Ncal_\gamma(z)=\int_0^\infty M_{z,\alpha}\dd\alpha.
\]
Hence, by the triangle inequality and concavity of $c_n$,
\begin{align*}
 \|\Kcal\Ncal_\gamma\|_1
 &\le\int_0^\infty\int_{\Sph}|M_{z,\alpha}|\dd\mu(z)\dd\alpha\\
 &\le\int_0^\infty\int_{\Sph}c_n(a_z(\alpha))\dd\mu(z)\dd\alpha\\
 &\le\int_0^\infty c_n(p(\alpha))\dd\alpha.
\end{align*}
Since $K$ is nondecreasing in the scalar product, each $C_{z,\alpha}$ is a cap centered at $z$, up to a null boundary. Applying the same layer-cake representation to the identity vector field gives
\[
 \kappa_1z=\int_{\Sph}K(z\cdot\omega)\omega\dd\mu(\omega)
 =\left(\int_0^\infty c_n(p(\alpha))\dd\alpha\right)z.
\]
This proves \eqref{eq:coupling-contraction}. If $Q\in O(n)$ and
$\gamma=(x,Qx)_\#\mu$, then direct substitution gives equality, so the constant is sharp.
\end{proof}

\section{Finite slices and the sharp bounds}\label{sec:sharp-bounds}
We now apply Proposition~\ref{prop:coupling} to the direction map on each
interior sphere.  The construction stays at a fixed $t\in(r,1)$ and
therefore does not require a boundary extension of $h$.
For $r<t<1$, define
\[
 q_t(\omega)=\frac{h(t,\omega)}{|h(t,\omega)|}.
\]

\begin{lemma}\label{lem:slice}
The map $q_t:\Sph\to\Sph$ has degree $\pm1$ and is onto. There exists a Borel right inverse
$\sigma_t:\Sph\to\Sph$ with $q_t(\sigma_t(x))=x$. If
\[
 \gamma_t=(x,\sigma_t(x))_\#\mu,
 \qquad
 m_t=(\sigma_t)_\#\mu,
\]
then the first marginal of $\gamma_t$ is $\mu$ and
\begin{equation}\label{eq:slice-identity}
 \int x\cdot h(t,\omega)\dd\gamma_t(x,\omega)
 =\int |h(t,\omega)|\dd m_t(\omega)>R.
\end{equation}
\end{lemma}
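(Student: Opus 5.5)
The proof has three parts: the degree of $q_t$ is computed from the topology of $h$, the selector $\sigma_t$ is produced by a measurable selection theorem, and the slice identity is obtained by direct substitution.

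\emph{Degree.} Write $h_t=h(t,\cdot)$. The map $h$ carries the separating sphere $S_t=\{|x|=t\}$ onto an embedded topological sphere $h(S_t)\subset\A(R,1)$. I would compute the degree of $q_t=\pi\circ h_t$, where $\pi(y)=y/|y|$, by homotopy. On $\A(r,1)$ the radial retraction gives a homotopy $S_t\simeq S_s$ for all $s$. Since $\pi$ is defined on all of $\A(R,1)$, the family $\pi\circ h(s,\cdot)$ is a homotopy of maps $\Sph\to\Sph$, so $\deg q_t$ is independent of $t$. I would then identify this degree with the degree of the map induced by $h$ on $H_{n-1}$. The inclusion $\Sph\to\A(r,1)$ and the projection $\pi:\A(R,1)\to\Sph$ are homotopy equivalences, and $h$ is a homeomorphism, so $\pi\circ h\circ(t\,\cdot)$ induces an isomorphism $H_{n-1}(\Sph)\cong\mathbb Z\to\mathbb Z$. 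Hence $\deg q_t=\pm1$. A map $\Sph\to\Sph$ that is not onto factors through $\Sph$ minus a point, which is contractible, and so has degree $0$. Therefore $q_t$ is onto.

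\emph{Selector.} The map $q_t$ is continuous and surjective on a compact metric space. Each fibre $q_t^{-1}(x)$ is nonempty and compact, and the set-valued map $x\mapsto q_t^{-1}(x)$ is upper semicontinuous with closed graph. The Kuratowski--Ryll-Nardzewski selection theorem (or the classical theorem on Borel sections of continuous surjections between compact metric spaces) then gives a Borel map $\sigma_t$ with $q_t\circ\sigma_t=\mathrm{id}$. This step only requires citing the theorem correctly.

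\emph{Identity and inequality.} By construction $\gamma_t=(x,\sigma_t(x))_\#\mu$ has first marginal $\mu$. Substituting gives
\[
\int x\cdot h(t,\omega)\,d\gamma_t=\int x\cdot h(t,\sigma_t(x))\,d\mu(x).
\]
Since $q_t(\sigma_t(x))=x$, we have $h(t,\sigma_t(x))=|h(t,\sigma_t(x))|\,x$, so the integrand equals $|h(t,\sigma_t(x))|$. By the definition of $m_t$ this integrates to $\int|h(t,\omega)|\,dm_t(\omega)$. Finally $|h(t,\omega)|>R$ pointwise because $h$ maps into $\A(R,1)$, and $m_t$ is a probability measure, so the integral is strictly greater than $R$.

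The main obstacle I expect is the degree computation, specifically justifying that the composite induces an isomorphism on $H_{n-1}$ without assuming any boundary regularity. This is handled entirely in the interior by the homotopy equivalences above, so no boundary regularity is needed. The selector and the identity are routine.
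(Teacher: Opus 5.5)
Your proposal is correct and follows essentially the same route as the paper. The degree comes from $q_t=p\circ h\circ j_t$ being a homeomorphism composed with homotopy equivalences (your extra $t$-independence homotopy is harmless but not needed). The selector comes from the Borel section theorem for continuous surjections of compact metric spaces, and the identity from $x=q_t(\omega)$ on the support of $\gamma_t$. The only difference is the strict inequality: you use $|h(t,\cdot)|>R$ pointwise against a probability measure, while the paper uses compactness of $h(\{t\}\times\Sph)$ to get a uniform gap; both are valid.
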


\begin{proof}
The inclusion $j_t(\omega)=t\omega$ and the radial projection $p(y)=y/|y|$ are homotopy equivalences. Since $h$ is a homeomorphism,
$q_t=p\circ h\circ j_t$ induces an isomorphism on $H_{n-1}(\Sph;\mathbb Z)$, hence has degree $\pm1$ and is surjective.
These are the standard homological properties of degree
\cite[Section~2.2]{Hatcher}.

Since $q_t$ is a continuous surjection between compact metric spaces, the
Borel right-inverse theorem gives a Borel map $\sigma_t$ satisfying
$q_t\circ\sigma_t=\operatorname{id}_{\Sph}$
\cite[Theorem~6.9.7]{Bogachev}.  On the support of $\gamma_t$
we have $x=q_t(\omega)$, so
$x\cdot h(t,\omega)=|h(t,\omega)|$.  The compact set
$h(\{t\}\times\Sph)$ lies inside the open target annulus, hence its modulus
has minimum strictly larger than $R$.  This proves
\eqref{eq:slice-identity}.
\end{proof}

Let $\Ncal_t=\Ncal_{\gamma_t}$. If $C_t$ is either $A_t$ or $B_t$, then the kernel is continuous and symmetric. Thus for every $f\in L^\infty(\mu;\mathbb R^n)$,
\begin{equation}\label{eq:Fubini-swap}
 \int x\cdot(C_tf)(\omega)\dd\gamma_t(x,\omega)
 =\int f(z)\cdot(C_t\Ncal_t)(z)\dd\mu(z).
\end{equation}
Equation~\eqref{eq:Fubini-swap} integrates the rough trace $f$ only against
$\mu$, never directly against the possibly singular measure $m_t$.

By Proposition~\ref{prop:coupling}, Lemma~\ref{lem:kernel-monotone}, and \eqref{eq:first-multipliers},
\begin{equation}\label{eq:AB-contraction}
 \|A_t\Ncal_t\|_1\le a_1(t),
 \qquad
 \|B_t\Ncal_t\|_1\le b_1(t).
\end{equation}

\begin{proof}[Proof of Theorem~\ref{thm:sharp-bounds}]
Suppose first that $h$ preserves ends. Combining \eqref{eq:Poisson-pres}, \eqref{eq:slice-identity}, \eqref{eq:Fubini-swap}, and \eqref{eq:AB-contraction}, we get
\[
 R<R a_1(t)+b_1(t),\qquad r<t<1.
\]
Set $F(t)=R(a_1(t)-1)+b_1(t)$. Then $F(r)=0$ and $F(t)>0$ for $t>r$, so $F'(r+)\ge0$. Using \eqref{eq:inner-derivatives},
\[
 0\le -R\frac{n-1+r^n}{r(1-r^n)}+\frac{n}{1-r^n},
\]
which is equivalent to
\[
 R\le\Rplus(r).
\]

If $h$ reverses ends, the same argument using \eqref{eq:Poisson-rev} gives
\[
 R<a_1(t)+Rb_1(t),\qquad r<t<1.
\]
Let $G(t)=a_1(t)+R(b_1(t)-1)$. Then $G(1)=0$ and $G(t)>0$ for $t<1$, hence $G'(1-)\le0$. By \eqref{eq:outer-derivatives},
\[
 0\ge -\frac{nr^{n-1}}{1-r^n}
 +R\frac{1+(n-1)r^n}{1-r^n},
\]
so
\[
 R\le\Rminus(r).
\]

Finally,
\begin{equation}\label{eq:R-comparison}
 \Rplus(r)-\Rminus(r)=
 \frac{nr(1-r^2)}{(n-1+r^n)(1+(n-1)r^n)}
 \left(\sum_{k=0}^{n-2}r^{2k}-(n-1)r^{n-2}\right).
\end{equation}
The $n-1$ positive numbers $1,r^2,\ldots,r^{2n-4}$ have geometric mean
$r^{n-2}$.  Hence AM--GM applied to the bracket in
\eqref{eq:R-comparison} gives
\[
 \Rminus(r)<\Rplus(r).
\]
The radial maps in Section~\ref{sec:radial} attain both constants, so the
two bounds are sharp.
\end{proof}

\section{Critical rigidity}\label{sec:critical-rigidity}
We now prove the rigidity statement in
Theorem~\ref{thm:critical-rigidity}.  At a critical radius, rigidity reduces
first to classifying equality in \eqref{eq:coupling-contraction} and then to
recovering the two measurable boundary traces.

\begin{proposition}\label{prop:coupling-rigidity}
Let $K\in C([-1,1])$ be nonnegative and strictly increasing, and let $\kappa_1>0$ be its degree-one multiplier. If $\gamma$ is a probability measure on
$\Sph_x\times\Sph_\omega$ with first marginal $\mu$, then
\[
 \|\Kcal\Ncal_\gamma\|_1\le\kappa_1.
\]
Equality holds if and only if there exists $M\in O(n)$ such that
\begin{equation}\label{eq:orthogonal-coupling}
 \gamma=(x,Mx)_\#\mu.
\end{equation}
\end{proposition}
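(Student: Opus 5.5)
The inequality is Proposition~\ref{prop:coupling}, and the ``if'' direction is the direct substitution already noted there. Equality for $\gamma=(x,Mx)_\#\mu$ also follows since $\Kcal\Ncal_\gamma(z)=\int K(z\cdot Mx)x\,\dd\mu=\kappa_1M^{T}z$ has unit length times $\kappa_1$. So the work is the ``only if'' direction. The plan is to trace equality back through the three inequalities in the proof of Proposition~\ref{prop:coupling}.

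Assume equality. Then for a.e.\ $(z,\alpha)$ with $0<\alpha<\|K\|_\infty$ (where $0<p(\alpha)<1$), three things must hold.
\begin{enumerate}[label=(\alph*)]
\item The triangle inequality is an equality: $M_{z,\alpha}$ is a nonnegative multiple of a fixed unit vector $e(z)$, independent of $\alpha$.
\item The bathtub bound is an equality: $|M_{z,\alpha}|=c_n(a_z(\alpha))$.
\item Jensen is an equality: since $c_n$ is strictly concave, $a_z(\alpha)=p(\alpha)$ for a.e.\ $z$.
\end{enumerate}
Because $K$ is continuous and strictly increasing, $C_{z,\alpha}$ is a genuine open cap $C_{s(\alpha)}(z)$. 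As $\alpha$ ranges over $(K(-1),K(1))$, the threshold $s(\alpha)$ covers $(-1,1)$. Since $c_n(p)>0$ there, $M_{z,\alpha}\neq0$, and the equality case of Lemma~\ref{lem:cap} yields
\[
\pi_x\bigl(C_{s}(z)\bigr)=\mathbf 1_{C_{s}(e(z))}(x)\quad\text{for }\mu\text{-a.e. }x,
\]
for a.e.\ $z$ and a.e.\ $s\in(-1,1)$. Monotone limits in $s$ upgrade this to all $s$ and closed caps, outside a null set of $x$ depending on $z$.

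The next step extracts a deterministic map. Fix a countable dense set $Z$ of good $z$ and countable dense $s$-values. For $\mu$-a.e.\ $x$, each $\pi_x$ gives mass $0$ or $1$ to every cap $C_s(z)$, $z\in Z$. Caps from a countable dense family generate the Borel $\sigma$-algebra, so $\pi_x$ is $\{0,1\}$-valued there and hence a Dirac mass $\delta_{\tau(x)}$. The identity then reads
\[
\tau(x)\cdot z>s\iff x\cdot e(z)>s .
\]
Taking the supremum over $s$ gives $\tau(x)\cdot z=x\cdot e(z)$ for a.e.\ $x$ and all $z\in Z$. So $\gamma=(x,\tau(x))_\#\mu$, and $\tau$ is pinned down by the assignment $z\mapsto e(z)$.

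It remains to show $\tau$ is orthogonal; I expect this to be the main obstacle. Define $L$ on $Z$ by $L^{T}z=e(z)$, so that $z\cdot\tau(x)=(L^{T}z)\cdot x$. For $\mu$-a.e.\ $x$, the left side is linear in $z$ on the span of $Z$: $\tau(x)$ is a fixed vector, and $z\mapsto z\cdot\tau(x)$ is the restriction of a linear functional. Choose $z_1,\dots,z_k\in Z$ and scalars $c_i$ with $\sum c_iz_i=z_0\in Z$. Then $\bigl(\sum c_ie(z_i)-e(z_0)\bigr)\cdot x=0$ for a.e.\ $x$, which forces $e(z_0)=\sum c_ie(z_i)$. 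Hence $e$ extends to a linear map $E$ with $|Ez|=1$ on a dense set of unit vectors, so $E$ is an isometry, $E\in O(n)$. Then $\tau(x)\cdot z=(Ex)\cdot z$ for all $z$, which gives $\tau(x)=Ex$ a.e., and we set $M=E$.

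The delicate points are the measurability and null-set bookkeeping when choosing a countable good family $Z$. The alternative would be to derive linearity of $e$ from $\kappa_1$-equality directly. I would first try the countable-dense argument above, using continuity of $K$ to pass from a.e.\ $s$ to all $s$.
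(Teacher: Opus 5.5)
Your proposal is correct and is essentially the paper's argument: forcing equality in the triangle, bathtub and concavity steps gives conditional cap probabilities $\pi_x(\{z\cdot\omega>s\})=\mathbf 1_{\{e(z)\cdot x>s\}}$ with a threshold-independent direction $e(z)$. This makes $\gamma$ a graph with $z\cdot\tau(x)=e(z)\cdot x$, and linear algebra then yields an orthogonal map. The paper reaches the graph instead through a conditional-variance identity and integration against the Stieltjes measure $\nu$ of $K$, and gets orthogonality from $|Mx|=1$ rather than from $|e(z)|=1$; these are only technical differences.

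Two small corrections. First, your good thresholds are $\nu$-a.e.\ in $s$, not Lebesgue-a.e.; your monotone-limit step only needs them to be dense, which follows from strict monotonicity of $K$. Second, $z\cdot\tau(x)=(Ez)\cdot x$ gives $\tau(x)=E^{T}x$, so you should set $M=E^{T}$.
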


\begin{proof}
The inequality is precisely \eqref{eq:coupling-contraction}, because the
present kernel satisfies the hypotheses of
Proposition~\ref{prop:coupling}.  It remains to prove the equality
classification \eqref{eq:orthogonal-coupling} and its converse.

Disintegrate
$\dd\gamma=\dd\mu(x)\dd\pi_x(\omega)$ using the standard-Borel
disintegration theorem \cite[Chapter~8]{Kallenberg}. Let $\nu$ be the
Lebesgue--Stieltjes measure of $K$.  Continuity of $K$ makes $\nu$ atomless,
and strict increase gives positive $\nu$-measure to every nonempty open
subinterval of $(-1,1)$.  For $u\in[-1,1]$,
\[
 K(u)=K(-1)+\int_{(-1,1)}\mathbf1_{\{u>s\}}\dd\nu(s).
\]
The constant term contributes nothing because
$\Ncal_\gamma(\Sph)=\int x\dd\mu(x)=0$.

For $-1<s<1$ define
\[
 C_{z,s}=\{\omega:z\cdot\omega>s\},
 \qquad
 q_{z,s}(x)=\pi_x(C_{z,s}),
\]
\[
 a_{z,s}=\int q_{z,s}\dd\mu,
 \qquad
 M_{z,s}=\int q_{z,s}(x)x\dd\mu(x).
\]
Since
$\{(z,s,\omega):z\cdot\omega>s\}$ is open, the standard measurability
theorem for integration against probability kernels makes
$(x,z,s)\mapsto q_{z,s}(x)$ jointly Borel.  Fubini then gives jointly
measurable versions of $a_{z,s}$ and $M_{z,s}$.
The Stieltjes layer formula gives
\begin{equation}\label{eq:Stieltjes-layer}
 \Kcal\Ncal_\gamma(z)=\int_{(-1,1)}M_{z,s}\dd\nu(s).
\end{equation}
Rotational invariance and Fubini imply
\begin{equation}\label{eq:avg-a}
 \int_{\Sph}a_{z,s}\dd\mu(z)=p_n(s).
\end{equation}

For fixed $s$, strict concavity of $c_n$ implies the tangent deficit
\[
 J_s(a)=c_n(p_n(s))+s(a-p_n(s))-c_n(a)
\]
satisfies
\[
 J_s(a)\ge0,
 \qquad
 J_s(a)=0\Longleftrightarrow a=p_n(s).
\]
The fractional bathtub inequality gives $|M_{z,s}|\le c_n(a_{z,s})$. Also, applying the Stieltjes layer formula to the identity vector field yields
\begin{equation}\label{eq:kappa-Stieltjes}
 \kappa_1=\int_{(-1,1)}c_n(p_n(s))\dd\nu(s).
\end{equation}
Combining the triangle inequality, the bathtub inequality, \eqref{eq:avg-a}, and \eqref{eq:kappa-Stieltjes} gives the exact defect decomposition
\begin{equation}\label{eq:defect-decomp}
 \kappa_1-\|\Kcal\Ncal_\gamma\|_1=D_J+D_B+D_T,
\end{equation}
where
\begin{align*}
 D_J&=\iint J_s(a_{z,s})\dd\mu(z)\dd\nu(s),\\
 D_B&=\iint\bigl(c_n(a_{z,s})-|M_{z,s}|\bigr)\dd\mu(z)\dd\nu(s),\\
 D_T&=\int_{\Sph}\left[\int|M_{z,s}|\dd\nu(s)
 -\left|\int M_{z,s}\dd\nu(s)\right|\right]\dd\mu(z).
\end{align*}
Every term is nonnegative.

Assume equality. Then $D_J=D_B=D_T=0$. From $D_J=0$,
\begin{equation}\label{eq:a-equal-p}
 a_{z,s}=p_n(s)
\end{equation}
for $\mu\otimes\nu$-almost every $(z,s)$. Since $0<p_n(s)<1$ for $-1<s<1$, the cap barycenter is nonzero. The equality case in Lemma~\ref{lem:cap}, together with $D_B=0$, therefore gives a measurable unit vector $e(z,s)$ such that
\begin{equation}\label{eq:q-cap}
 q_{z,s}(x)=\mathbf1_{\{e(z,s)\cdot x>s\}}
\end{equation}
for $\mu(x)\otimes\mu(z)\otimes\nu(s)$-almost every triple. The threshold is exactly $s$ because the mass equals $p_n(s)$ by \eqref{eq:a-equal-p}.
On the equality set the cap barycenter is nonzero and its direction is
unique, so one may take
\[
 e(z,s)=\frac{M_{z,s}}{|M_{z,s}|},
\]
and assign an arbitrary fixed unit vector off that set.  This gives a
jointly measurable version of $e$.

For almost every fixed $z$, $D_T=0$.  Set
\[
 W(z):=\int_{(-1,1)}M_{z,s}\dd\nu(s)
\]
and $e(z)=W(z)/|W(z)|$.  Since
\[
 0=\int_{(-1,1)}\bigl(|M_{z,s}|-e(z)\cdot M_{z,s}\bigr)\dd\nu(s),
\]
and the integrand is nonnegative,
\[
 M_{z,s}=|M_{z,s}|e(z)
\]
for $\nu$-almost every $s$.  Here $W(z)\ne0$ because
$|M_{z,s}|=c_n(p_n(s))>0$ for $\nu$-almost every $s$ and
$\kappa_1>0$.  Hence $e(z,s)=e(z)$ for $\nu$-almost every $s$, and
\begin{equation}\label{eq:q-cap-fixed}
 q_{z,s}(x)=\mathbf1_{\{e(z)\cdot x>s\}}
\end{equation}
for almost every $(x,z,s)$.

For fixed $(z,s)$, $q_{z,s}(x)$ is the conditional expectation, with
respect to $x$, of $Y=\mathbf1_{\{z\cdot\omega>s\}}$ under $\gamma$.
By \eqref{eq:q-cap-fixed}, $q_{z,s}\in\{0,1\}$ almost everywhere, and the
conditional-variance identity gives
\[
 \mathbb E_\gamma[(Y-q_{z,s})^2]
 =\mathbb E_\gamma[q_{z,s}-q_{z,s}^2]=0.
\]
Consequently
\begin{equation}\label{eq:indicator-equality}
 \mathbf1_{\{z\cdot\omega>s\}}=
 \mathbf1_{\{e(z)\cdot x>s\}}
\end{equation}
for $\mu(z)\otimes\nu(s)\otimes\gamma(x,\omega)$-almost every quadruple. The use of Fubini here is legitimate because all indicator functions are jointly Borel measurable.

For any $\alpha,\beta\in[-1,1]$,
\begin{equation}\label{eq:Stieltjes-distance}
 \int_{(-1,1)}\left|\mathbf1_{\{\alpha>s\}}-
 \mathbf1_{\{\beta>s\}}\right|\dd\nu(s)=|K(\alpha)-K(\beta)|.
\end{equation}
Integrating \eqref{eq:indicator-equality} in $s$ and using \eqref{eq:Stieltjes-distance} shows
\[
 K(z\cdot\omega)=K(e(z)\cdot x)
\]
for $\mu(z)\otimes\gamma$-almost every $(z,x,\omega)$. Strict monotonicity of $K$ gives
\begin{equation}\label{eq:scalar-linearization}
 z\cdot\omega=e(z)\cdot x
\end{equation}
for $\mu(z)\otimes\gamma$-almost every triple.

Choose linearly independent $z_1,\dots,z_n$ from a full-$\mu$-measure set
on which \eqref{eq:scalar-linearization} holds for $\gamma$-almost every
$(x,\omega)$; this is possible because a proper linear subspace meets
$\Sph$ in a $\mu$-null set.  If $Z$ and $E$ have rows $z_i^T$ and
$e(z_i)^T$, respectively, then
\[
 Z\omega=Ex
\]
for $\gamma$-almost every $(x,\omega)$.  Thus
\[
 \omega=Mx,
 \qquad M=Z^{-1}E,
\]
$\gamma$-almost everywhere.  Since the first marginal is $\mu$ and
$|\omega|=1$, one has $|Mx|=1$ for $\mu$-almost every $x\in\Sph$.
The measure $\mu$ has full support, so continuity of
$x\mapsto|Mx|^2-1$ gives $M^TM=I$.  Hence $M\in O(n)$ and
\eqref{eq:orthogonal-coupling} follows.  Conversely, if
$\gamma=(x,Mx)_\#\mu$ with $M\in O(n)$, rotational invariance gives
\[
 (\Kcal\Ncal_\gamma)(z)
 =\int_{\Sph}K(z\cdot Mx)x\dd\mu(x)
 =\kappa_1M^Tz,
\]
so equality holds.
\end{proof}

Define the endpoint transfer operators
\begin{align}
 \Tcal&=\lim_{t\downarrow r}\frac{B_t}{t-r}
       =\left.\partial_tB_t\right|_{t=r+},
       \label{eq:transfer-operator}\\
 \mathcal S&=\lim_{t\uparrow1}\frac{A_t}{1-t}
       =-\left.\partial_tA_t\right|_{t=1^-}.
       \label{eq:reverse-transfer-operator}
\end{align}
On degree-$\ell$ spherical harmonics their multipliers are
\begin{align}
 \tau_\ell&=b_\ell'(r)
 =\frac{(2\ell+n-2)r^{\ell-1}}{1-r^{2\ell+n-2}},
 &\tau_1&=\frac{n}{1-r^n},
 \label{eq:tauell}\\
 \sigma_\ell&=-a_\ell'(1)
 =\frac{(2\ell+n-2)r^{\ell+n-2}}{1-r^{2\ell+n-2}},
 &\sigma_1&=\frac{nr^{n-1}}{1-r^n}.
 \label{eq:reverse-sigma}
\end{align}

\begin{lemma}\label{lem:endpoint-transfer}
Let $K_{B,t}$ and $K_{A,t}$ denote the kernels of $B_t$ and $A_t$ with
respect to $\mu$.  There are smooth zonal kernels $K_T$ and $K_S$ such
that, for every integer $m\ge0$,
\begin{align}
 \left\|\frac{K_{B,t}}{t-r}-K_T\right\|_{C^m(\Sph\times\Sph)}
 &\longrightarrow0 &&(t\downarrow r),
 \label{eq:kernel-limit}\\
 \left\|\frac{K_{A,t}}{1-t}-K_S\right\|_{C^m(\Sph\times\Sph)}
 &\longrightarrow0 &&(t\uparrow1).
 \label{eq:reverse-kernel-Cinfty}
\end{align}
The integral operators associated with $K_T$ and $K_S$ are $\Tcal$ and
$\mathcal S$, respectively.  Both kernels are positive on $[-1,1]$ and
strictly increasing in the scalar product.
\end{lemma}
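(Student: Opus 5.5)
Our plan is to work entirely with the spherical-harmonic (Funk--Hecke) expansions of the kernels. We begin with the outer-to-inner transfer and write
\[
 K_{B,t}(\omega\cdot\xi)=\sum_{\ell\ge0}b_\ell(t)\,Z_\ell(\omega\cdot\xi),
\]
where $Z_\ell$ is the degree-$\ell$ reproducing (zonal) kernel with respect to $\mu$. The series converges in every $C^m$ for $r<t<1$, by the polynomial bounds on $Z_\ell$ from the addition formula together with the exponential decay of $b_\ell(t)$ used in the proof of Lemma~\ref{lem:traces}. The candidate limit kernel is
\[
 K_T=\sum_\ell\tau_\ell Z_\ell .
\]
Since $\tau_\ell\le C(1+\ell)r^{\ell-1}$ by \eqref{eq:tauell}, this series converges in every $C^m$, so $K_T$ is smooth and zonal, and it clearly has multipliers $\tau_\ell$, i.e.\ it represents $\Tcal$.

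To obtain the convergence \eqref{eq:kernel-limit}, we control the difference quotient
\[
 \frac{b_\ell(t)}{t-r}-\tau_\ell
\]
uniformly in $\ell$ with geometric decay. Because $b_\ell(r)=0$, Taylor's formula bounds this quantity by
\[
 (t-r)\sup_{[r,t]}|b_\ell''|\le (t-r)\,C(1+\ell)^2\rho^\ell
\]
for $t\in[r,r_1]$ with some fixed $r_1<1$, where $\rho<1$ depends on $r_1$ and $r$. This estimate follows directly from the explicit formula \eqref{eq:bell}: the term $t^\ell$ is at most $r_1^\ell$, and
\[
 r^{2\ell+n-2}t^{-(\ell+n-2)}\le r^{\ell}.
\]
Multiplying by $\|Z_\ell\|_{C^m}\lesssim(1+\ell)^{N_m}$ and summing over $\ell$ gives a $C^m$ error of order $O(t-r)$. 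The case of $\mathcal S$ is identical, using $a_\ell(1)=0$, formula \eqref{eq:aell}, and $\sigma_\ell$ from \eqref{eq:reverse-sigma}. Near $t=1$ the factor $r^{\ell+n-2}$ supplies the geometric decay, while $|a_\ell''|\le C(1+\ell)^2r^{\ell}$ uniformly for $t\in[r_1,1]$.

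Next we obtain positivity and monotonicity, which is where the real content lies. As uniform limits of the positive, strictly increasing kernels $K_{B,t}/(t-r)$ from Lemma~\ref{lem:kernel-monotone}, the limit $K_T$ is immediately nonnegative and nondecreasing; the work is to upgrade these to strict statements. Our plan is to identify $K_T$ geometrically. Since $B_tg$ vanishes at $t=r$, we have $K_T(\omega\cdot\xi)=\partial_t K_{B,t}|_{t=r}$. Via \eqref{eq:KA-KB-Poisson} this equals, up to a positive constant, the inward normal derivative at $r\omega$ of $y\mapsto p_D(y,\xi)$, which is a positive harmonic function vanishing on the inner sphere. The Hopf lemma applied to $y\mapsto p_D(y,\xi)$ at the inner boundary gives strict positivity; boundary smoothness of $p_D$ near $r\omega$ holds because the pole $\xi$ lies on the other component.

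For strict monotonicity we reuse the reflection argument of Lemma~\ref{lem:kernel-monotone}. Take $\omega\cdot\xi_1>\omega\cdot\xi_2$ and let $S$ be the reflection with $S\xi_1=\xi_2$. The difference
\[
 V(y)=p_D(y,\xi_1)-p_D(y,\xi_2)
\]
is harmonic on $D\cap H$ (on the $\xi_1$ side of the reflecting plane $\Pi$), vanishes on $\Pi$ and on the inner sphere, and is nonnegative on the outer sphere there as the limit of Poisson kernels. By the strict monotonicity of Lemma~\ref{lem:kernel-monotone}, $V>0$ in $D\cap H$. Because $V$ vanishes on the inner sphere, Hopf at $r\omega$ (away from $\Pi$ if $\omega\notin\Pi$) gives $\partial_\nu V(r\omega)\ne0$, i.e.\ strict increase of $K_T$. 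When $\omega\in\Pi$, we instead use continuity together with a nondecreasing function that is strictly increasing on a dense set, which suffices.

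The kernel $K_S$ is handled the same way with the roles of the spheres swapped. Here $K_{A,t}/(1-t)$ becomes the outward normal derivative at the outer sphere of
\[
 y\mapsto r^{n-1}p_D(y,r\xi),
\]
which vanishes on the outer sphere, so the same Hopf-and-reflection argument applies.

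We expect the main obstacle to be this strictness step. The delicate points are justifying that the $t$-derivative of the kernel equals the normal derivative of a smooth function up to the boundary, and applying Hopf on $D\cap H$ at points near the reflecting plane $\Pi$. We will use the density-and-continuity fallback for points of $\Pi$.
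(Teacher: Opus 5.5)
Your proposal is correct and follows essentially the paper's route. You expand the kernels in zonal harmonics and play the polynomial $C^m$ bounds on $Z_\ell$ against geometrically decaying multipliers. Your second-order Taylor bound even gives an $O(t-r)$ rate, where the paper uses first-derivative majorants and dominated convergence. You then obtain positivity by Hopf, and strict monotonicity from $V>0$ (via Lemma~\ref{lem:kernel-monotone}) followed by Hopf at $r\omega$ (resp.\ $\omega$).

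Two small corrections. On $[r_1,1]$ the decay of $a_\ell''$ is $(r/r_1)^\ell$, not $r^\ell$; this is still geometric once $r_1>r$. Also, the fallback for $\omega\in\Pi$ is unnecessary: $\omega\cdot(\xi_1-\xi_2)>0$ already places the Hopf point strictly inside $H$, away from the reflecting plane.
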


\begin{proof}
Let $\mathcal H_\ell$ be the scalar spherical harmonics of degree $\ell$,
and let
\[
 Z_\ell(z,\omega)=\sum_kY_{\ell,k}(z)Y_{\ell,k}(\omega)
\]
be their reproducing kernel for a real orthonormal basis with respect to
$\mu$.  With $\lambda=(n-2)/2$, the addition formula gives
\[
 Z_\ell(z,\omega)=
 \frac{\dim\mathcal H_\ell}{C_\ell^\lambda(1)}
 C_\ell^\lambda(z\cdot\omega).
\]
The differentiation identity
\[
 \frac{\dd^j}{\dd s^j}C_\ell^\lambda(s)
 =2^j(\lambda)_jC_{\ell-j}^{\lambda+j}(s)
\]
(with the right-hand side zero when $j>\ell$) and the normalization in the
addition formula imply that, for each $m$, there are constants $C_{m,n}$
and $N_{m,n}$ such that
\begin{equation}\label{eq:zonal-polynomial-bound}
 \|Z_\ell\|_{C^m(\Sph\times\Sph)}
 \le C_{m,n}(1+\ell)^{N_{m,n}}.
\end{equation}

By \eqref{eq:aell}--\eqref{eq:bell},
\[
 K_{B,t}=\sum_{\ell=0}^\infty b_\ell(t)Z_\ell,
 \qquad
 K_{A,t}=\sum_{\ell=0}^\infty a_\ell(t)Z_\ell.
\]
Choose $\delta_r,\delta_1>0$ so that
\[
 \rho_r:=r+\delta_r<1,
 \qquad
 \rho_1:=\frac{r}{1-\delta_1}<1.
\]
Differentiating \eqref{eq:aell}--\eqref{eq:bell} gives constants depending
only on $n,r,\delta_r,\delta_1$ such that
\begin{align}
 0\le b_\ell'(s)&\le C(1+\ell)\rho_r^\ell
 &&(r\le s\le r+\delta_r),
 \label{eq:bell-uniform-exponential}\\
 0\le-a_\ell'(s)&\le C(1+\ell)\rho_1^\ell
 &&(1-\delta_1\le s\le1).
 \label{eq:aell-uniform-exponential}
\end{align}
Since $b_\ell(r)=a_\ell(1)=0$, the integral identities
\[
 \frac{b_\ell(t)}{t-r}
 =\frac1{t-r}\int_r^t b_\ell'(s)\dd s,
 \qquad
 \frac{a_\ell(t)}{1-t}
 =\frac1{1-t}\int_t^1[-a_\ell'(s)]\dd s
\]
give the same exponential majorants for the difference quotients.  For
each fixed $\ell$ they converge to $\tau_\ell$ and $\sigma_\ell$ in
\eqref{eq:tauell}--\eqref{eq:reverse-sigma}.  Combining
\eqref{eq:zonal-polynomial-bound} with
\eqref{eq:bell-uniform-exponential}--\eqref{eq:aell-uniform-exponential}
permits termwise dominated convergence in every $C^m$ norm.  Hence
\[
 K_T=\sum_{\ell=0}^\infty\tau_\ell Z_\ell,
 \qquad
 K_S=\sum_{\ell=0}^\infty\sigma_\ell Z_\ell,
\]
and proves \eqref{eq:kernel-limit}--\eqref{eq:reverse-kernel-Cinfty}.

It remains to prove positivity and strict angular monotonicity.  In the
normalization \eqref{eq:KA-KB-Poisson}, the outer Poisson density is a
positive harmonic function of its pole that vanishes on the inner sphere.
The Hopf lemma at $rz$ therefore gives
\[
 K_T(z\cdot\omega)
 =|\Sph|\left.\partial_t p_D(tz,\omega)\right|_{t=r+}>0.
\]
Likewise, the inner Poisson density vanishes on the outer sphere, so
\[
 K_S(z\cdot\omega)
 =|\Sph|r^{n-1}
 \left[-\left.\partial_t p_D(tz,r\omega)\right|_{t=1^-}\right]>0.
\]

Now fix $z\cdot\omega_1>z\cdot\omega_2$.  The reflection comparison in
the proof of Lemma~\ref{lem:kernel-monotone}, viewed as a comparison in the
pole variable, gives
\[
 \begin{aligned}
 V_1(y)&:=p_D(y,\omega_1)-p_D(y,\omega_2)>0,\\
 V_r(y)&:=p_D(y,r\omega_1)-p_D(y,r\omega_2)>0
 \end{aligned}
\]
in the corresponding open half-annulus.  Here $V_1=0$ on the inner sphere
and $V_r=0$ on the outer sphere.  The local Hopf lemma at $rz$ and $z$
gives
\[
 \left.\partial_tV_1(tz)\right|_{t=r+}>0,
 \qquad
 -\left.\partial_tV_r(tz)\right|_{t=1^-}>0.
\]
After multiplying by the surface-area factors in
\eqref{eq:KA-KB-Poisson}, these inequalities say
\[
 K_T(z\cdot\omega_1)>K_T(z\cdot\omega_2),
 \qquad
 K_S(z\cdot\omega_1)>K_S(z\cdot\omega_2).
\]
Both Hopf points lie in smooth spherical boundary pieces away from the
reflecting edge.  This proves the remaining assertions.
\end{proof}

\begin{lemma}\label{lem:uniform-varying}
Let $K_j,K\in C(\Sph\times\Sph)$ with $K_j\to K$ uniformly, and let
$\gamma_j\rightharpoonup\gamma$ be probability measures on
$\Sph\times\Sph$.  Then
\begin{equation}\label{eq:abstract-uniform-varying}
 \int xK_j(z,\omega)\dd\gamma_j(x,\omega)
 \longrightarrow
 \int xK(z,\omega)\dd\gamma(x,\omega)
\end{equation}
uniformly for $z\in\Sph$.
\end{lemma}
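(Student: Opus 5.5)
The plan is to split the difference into a kernel-perturbation term and a weak-convergence term:
\[
 \int xK_j(z,\omega)\dd\gamma_j-\int xK(z,\omega)\dd\gamma
 =\int x\bigl(K_j-K\bigr)(z,\omega)\dd\gamma_j
 +\Bigl(\int xK(z,\omega)\dd\gamma_j-\int xK(z,\omega)\dd\gamma\Bigr).
\]
The first term is bounded in norm by $\|K_j-K\|_\infty$, because $|x|=1$ and $\gamma_j$ is a probability measure. It therefore tends to zero uniformly in $z$.

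For the second term, fix $z$. The map $(x,\omega)\mapsto xK(z,\omega)$ is a bounded continuous vector-valued function on the compact space $\Sph\times\Sph$, so weak convergence of $\gamma_j$ gives convergence of this term for each fixed $z$, coordinate by coordinate. Define
\[
 F_j(z)=\int xK(z,\omega)\dd\gamma_j(x,\omega),
 \qquad
 F(z)=\int xK(z,\omega)\dd\gamma(x,\omega).
\]

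To upgrade pointwise convergence to uniform convergence, I will show that the family $\{F_j\}$ is equicontinuous. Since $K$ is uniformly continuous on the compact set $\Sph\times\Sph$, we have
\[
 |F_j(z)-F_j(z')|\le\sup_\omega|K(z,\omega)-K(z',\omega)|=:\eta(|z-z'|),
\]
with $\eta\to0$, uniformly in $j$, and the same bound holds for $F$. Given $\varepsilon>0$, choose a finite $\delta$-net $\{z_k\}$ of $\Sph$ with $\eta(\delta)<\varepsilon$. Then choose $j_0$ so that $|F_j(z_k)-F(z_k)|<\varepsilon$ for all $k$ and all $j\ge j_0$. For any $z$ with nearest net point $z_k$, this gives
\[
 |F_j(z)-F(z)|\le|F_j(z)-F_j(z_k)|+|F_j(z_k)-F(z_k)|+|F(z_k)-F(z)|<3\varepsilon
 \qquad(j\ge j_0).
\]
Combined with the first term, this proves \eqref{eq:abstract-uniform-varying}.

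No step here is a real obstacle. The only point needing care is that pointwise weak convergence does not by itself give uniformity in $z$, and the equicontinuity argument above supplies it. As an alternative route, one could apply the Stone--Weierstrass theorem to approximate $K$ uniformly by finite sums $\sum_i f_i(z)g_i(\omega)$, which reduces the second term to finitely many weak limits.
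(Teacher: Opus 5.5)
Your proof is correct and follows essentially the paper's route: the same split into a term bounded by $\|K_j-K\|_\infty$ plus a weak-convergence term, with uniformity in $z$ obtained from a finite net. The paper phrases the second step as compactness of $\{F_z\}$ in $C(\Sph\times\Sph;\mathbb R^n)$ together with $|L_j(F)-L_j(G)|\le 2\|F-G\|_\infty$, which is the same estimate as your equicontinuity bound.
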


\begin{proof}
The contribution of $K_j-K$ is bounded uniformly in $z$ by
$\|K_j-K\|_\infty$.  For the remaining term, put
\[
 F_z(x,\omega)=K(z,\omega)x,
 \qquad
 L_j(F)=\int F\dd\gamma_j-\int F\dd\gamma.
\]
Weak convergence gives $L_j(F)\to0$ for every continuous $F$, while
\[
 |L_j(F)-L_j(G)|\le2\|F-G\|_\infty.
\]
The map $z\mapsto F_z$ is continuous from the compact sphere into
$C(\Sph\times\Sph;\mathbb R^n)$, so $\{F_z:z\in\Sph\}$ is compact in the
uniform norm.  A finite $\varepsilon$-net and the last two facts give
$\sup_z|L_j(F_z)|\to0$, which proves
\eqref{eq:abstract-uniform-varying}.
\end{proof}

\subsection{The end-preserving critical case}
\label{sec:rigidity-preserving}

Assume first that
\[
 R=\Rstar:=\Rplus(r)=\frac{nr}{n-1+r^n}.
\]
The strict comparison \eqref{eq:R-comparison} shows that a critical map cannot reverse ends. Hence
\begin{equation}\label{eq:critical-Poisson}
 h(t,\cdot)=\Rstar A_t\phi+B_t\psi.
\end{equation}
Define
\[
 I_A(t)=\ip{\phi}{A_t\Ncal_t},
 \qquad
 I_B(t)=\ip{\psi}{B_t\Ncal_t},
\]
where the brackets denote the $L^2(\mu)$ dual pairing, and define
\[
 G_t=\int\bigl(|h(t,\omega)|-\Rstar\bigr)\dd m_t(\omega)>0.
\]
By \eqref{eq:slice-identity},
\[
 \Rstar I_A(t)+I_B(t)=\Rstar+G_t.
\]
On the other hand, $|\phi|=|\psi|=1$ a.e. and \eqref{eq:AB-contraction} give
$I_A(t)\le a_1(t)$ and $I_B(t)\le b_1(t)$. Therefore
\begin{equation}\label{eq:finite-defect}
 D_n(t):=\Rstar a_1(t)+b_1(t)-\Rstar
 =\Rstar(a_1(t)-I_A(t))+(b_1(t)-I_B(t))+G_t,
\end{equation}
with all three terms on the right nonnegative.

Substituting \eqref{eq:first-multipliers} and \eqref{eq:Rpm} into the
left-hand side of \eqref{eq:finite-defect} gives
\begin{equation}\label{eq:Dn}
 D_n(t)=\frac{(n-1)t+r^nt^{1-n}-nr}{n-1+r^n}.
\end{equation}
If $u=t/r$, then
\begin{equation}\label{eq:Dn-factor}
 D_n(t)=\frac{r(u-1)^2}{(n-1+r^n)u^{n-1}}
 \sum_{j=0}^{n-2}(j+1)u^j.
\end{equation}
Indeed,
\[
 (n-1)u^n-nu^{n-1}+1=(u-1)^2\sum_{j=0}^{n-2}(j+1)u^j.
\]
Consequently
\begin{equation}\label{eq:Dn-order}
 D_n(t)=\frac{n(n-1)}{2r(n-1+r^n)}(t-r)^2+O((t-r)^3),
\end{equation}
and, in particular,
\begin{equation}\label{eq:B-defect}
 0\le b_1(t)-I_B(t)\le D_n(t)=O((t-r)^2).
\end{equation}

To identify the outer trace, choose any sequence $t_j\downarrow r$.
Compactness of probability measures on $\Sph\times\Sph$ gives, after
passing to a subsequence,
\begin{equation}\label{eq:gamma-weak}
 \gamma_{t_j}\rightharpoonup\gamma.
\end{equation}
The first marginal of $\gamma$ is still $\mu$.  Set
\[
 U_j(z)=\frac{(B_{t_j}\Ncal_{t_j})(z)}{t_j-r}.
\]
By \eqref{eq:kernel-limit} and
Lemma~\ref{lem:uniform-varying}, applied with
\[
 K_j(z,\omega)=\frac{K_{B,t_j}(z,\omega)}{t_j-r},
 \qquad K=K_T,
\]
we have
\begin{equation}\label{eq:uniform-transfer}
 U_j\longrightarrow\Tcal\Ncal_\gamma
 \quad\text{uniformly on }\Sph.
\end{equation}

Divide \eqref{eq:B-defect} by $t_j-r$:
\[
 0\le \frac{b_1(t_j)}{t_j-r}-\ip{\psi}{U_j}
 \le \frac{D_n(t_j)}{t_j-r}\to0.
\]
By \eqref{eq:tauell} and Lemma~\ref{lem:uniform-varying},
\begin{equation}\label{eq:transfer-equality}
 \ip{\psi}{\Tcal\Ncal_\gamma}=\tau_1.
\end{equation}
Since $|\psi|=1$ a.e.,
\[
 \ip{\psi}{\Tcal\Ncal_\gamma}
 \le\|\Tcal\Ncal_\gamma\|_1\le\tau_1.
\]
Thus equality holds in the transfer coupling contraction. By
Lemma~\ref{lem:endpoint-transfer} and
Proposition~\ref{prop:coupling-rigidity}, there exists $M\in O(n)$ such that
$\gamma=(x,Mx)_\#\mu$. Consequently
\[
 \dd\Ncal_\gamma(\omega)=M^T\omega\dd\mu(\omega),
 \qquad
 \Tcal\Ncal_\gamma(z)=\tau_1M^Tz.
\]
Substituting this into \eqref{eq:transfer-equality} gives
\[
 \int_{\Sph}\psi(z)\cdot M^Tz\dd\mu(z)=1.
\]
The integrand is at most one pointwise, so
\begin{equation}\label{eq:outertrace}
 \psi(z)=M^Tz\quad\text{for }\mu\text{-a.e. }z.
\end{equation}
Replacing $h$ by the harmonic homeomorphism $\widetilde h=Mh$ replaces
$\psi$ by $M\psi$.  We may therefore assume
\begin{equation}\label{eq:psi-id}
 \psi(\omega)=\omega\quad\text{a.e.}
\end{equation}

It remains to identify the inner trace.  Under the normalization
\eqref{eq:psi-id},
\begin{equation}\label{eq:h-normalized}
 h(t,\omega)=\Rstar A_t\phi(\omega)+b_1(t)\omega.
\end{equation}
Because the image lies in the open target annulus,
$|h(t,\omega)|>\Rstar$ for every interior point. Squaring and integrating over $\Sph$ yields
\begin{equation}\label{eq:L2-nonpenetration}
 \Rstar^2\bigl(1-\|A_t\phi\|_2^2\bigr)
 <2\Rstar a_1(t)b_1(t)c+b_1(t)^2,
 \qquad
 c:=\ip{\phi}{\omega}.
\end{equation}
Indeed, self-adjointness and the degree-one multiplier property imply
$\ip{A_t\phi}{\omega}=a_1(t)c$.

Let $\Pi_\ell$ be the vector-valued projection onto degree-$\ell$ spherical harmonics and set
\[
 E_\ell=\|\Pi_\ell\phi\|_2^2.
\]
Since $|\phi|=1$ a.e.,
\begin{equation}\label{eq:E-sum}
 \sum_{\ell=0}^\infty E_\ell=1.
\end{equation}
Define
\begin{equation}\label{eq:lambdaell}
 \lambda_\ell=-a_\ell'(r)
 =\frac{(\ell+n-2)+\ell r^{2\ell+n-2}}
 {r(1-r^{2\ell+n-2})}.
\end{equation}

\begin{lemma}\label{lem:spectral-rigidity}
Under the normalization \eqref{eq:psi-id}, the inner trace satisfies
\begin{equation}\label{eq:innertrace}
 \phi(\omega)=\omega
 \quad\text{for }\mu\text{-almost every }\omega.
\end{equation}
\end{lemma}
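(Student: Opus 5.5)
The plan is to extract a first-order inequality from \eqref{eq:L2-nonpenetration} at the inner endpoint $t\downarrow r$, rewrite it in terms of the spectral data $E_\ell$, $\lambda_\ell$, and then use \eqref{eq:E-sum} together with the criticality relation $\Rstar\lambda_1=\tau_1$ to force all the energy into the mode $\phi(\omega)=\omega$.

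\emph{Step 1: rewrite both sides in modes.} Orthogonality gives $\|A_t\phi\|_2^2=\sum_\ell a_\ell(t)^2E_\ell$. With \eqref{eq:E-sum}, the left side of \eqref{eq:L2-nonpenetration} becomes $\Rstar^2\sum_\ell(1-a_\ell(t)^2)E_\ell$, and every term is nonnegative because $0\le a_\ell\le1$. Divide \eqref{eq:L2-nonpenetration} by $t-r$ and let $t\downarrow r$.
\begin{itemize}
\item For each fixed $\ell$, $(1-a_\ell(t)^2)/(t-r)\to 2\lambda_\ell$, since $a_\ell(r)=1$ and $a_\ell'(r)=-\lambda_\ell$.
\item Fatou's lemma for series then gives $\liminf$ of the left side $\ge 2\Rstar^2\sum_\ell\lambda_\ell E_\ell$, possibly $+\infty$. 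No uniform domination is needed.
\item On the right, $a_1(t)\to1$ and $b_1(t)/(t-r)\to\tau_1$, while $b_1(t)^2/(t-r)\to0$.
\end{itemize}
Hence
\[
 \Rstar\sum_{\ell\ge0}\lambda_\ell E_\ell\le \tau_1 c .
\]
From \eqref{eq:Rpm}, \eqref{eq:inner-derivatives} and \eqref{eq:lambdaell} one checks $\Rstar\lambda_1=\Rstar\frac{n-1+r^n}{r(1-r^n)}=\frac{n}{1-r^n}=\tau_1$. This is exactly the criticality condition, and it turns the inequality into
\[
 \sum_\ell\lambda_\ell E_\ell\le\lambda_1 c .
\]

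\emph{Step 2: the spectral facts.} From \eqref{eq:lambdaell} I need:
\begin{enumerate}
\item[(a)] $\lambda_\ell$ is strictly increasing in $\ell$; in particular $\lambda_\ell>\lambda_1$ for $\ell\ge2$.
\item[(b)] $\lambda_0<\lambda_1<2\lambda_0$, where $\lambda_0=\frac{n-2}{r(1-r^{n-2})}$.
\end{enumerate}
For (a), write $\lambda_\ell=\frac1r\bigl[(\ell+n-2)+(2\ell+n-2)\frac{r^{2\ell+n-2}}{1-r^{2\ell+n-2}}\bigr]$. Here the first term increases by one per step, and the second is nonnegative and, I expect, decreases by less than one per step. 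That can be handled by an elementary estimate of $x\mapsto xq^x/(1-q^x)$.

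For (b), the inequality $\lambda_1<2\lambda_0$ reads
\[
 \frac{n-1+r^n}{1-r^n}<\frac{2(n-2)}{1-r^{n-2}} .
\]
Clearing denominators reduces this to a polynomial inequality in $r\in(0,1)$. I would prove it by a factorization or AM--GM argument analogous to \eqref{eq:R-comparison}, checking the limits $r\to0$ and $r\to1$ first. I expect this calculus verification to be the main technical obstacle. The rest of the argument is soft.

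\emph{Step 3: close the argument.} Since $c=\ip{\phi}{\omega}=\ip{\Pi_1\phi}{\omega}$ and $\|\omega\|_2=1$ in $L^2(\mu;\mathbb R^n)$, Cauchy--Schwarz gives $c\le\sqrt{E_1}$. Put $x=1-E_0$, so $E_1\le x$. Using (a) and \eqref{eq:E-sum},
\[
 \lambda_0(1-x)+\lambda_1x+\sum_{\ell\ge2}(\lambda_\ell-\lambda_1)E_\ell\le\lambda_1\sqrt x\le\lambda_1\frac{1+x}{2}.
\]
Rearranging,
\[
 (1-x)\Bigl(\lambda_0-\frac{\lambda_1}{2}\Bigr)+\sum_{\ell\ge2}(\lambda_\ell-\lambda_1)E_\ell\le0 .
\]
By (b), $\lambda_0-\lambda_1/2>0$, and by (a) each $\lambda_\ell-\lambda_1>0$ for $\ell\ge2$. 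Both terms are therefore nonnegative, so both vanish. This gives $x=1$, i.e. $E_0=0$, and $E_\ell=0$ for $\ell\ge2$, so $E_1=1$.

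All the inequalities in the chain are then equalities, so $c=1$. Finally $\int\phi\cdot\omega\,\dd\mu=1$ with $|\phi|=1$ a.e. forces $\phi(\omega)=\omega$ $\mu$-a.e., because the integrand is at most one pointwise. This is \eqref{eq:innertrace}.
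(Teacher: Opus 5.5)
Your proposal is correct and follows the paper's proof: the same Fatou/limsup passage from \eqref{eq:L2-nonpenetration} to $\sum_\ell\lambda_\ell E_\ell\le\lambda_1c$ via $\Rstar\lambda_1=\tau_1$, and the same spectral inputs. Only the closing bookkeeping differs: you use $c\le\sqrt{E_1}\le\sqrt{1-E_0}\le 1-E_0/2$ to get $E_0(\lambda_0-\lambda_1/2)+\sum_{\ell\ge2}(\lambda_\ell-\lambda_1)E_\ell\le0$, whereas the paper writes the exact defect identity \eqref{eq:spectral-defect} and reads off $c=1$ from the factor $(1-c)[\lambda_0-(\lambda_1-\lambda_0)c]$.

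The step you flagged as the main obstacle is short, and your monotonicity heuristic is right. For $n\ge4$, $\lambda_1/\lambda_0=\frac{n-1+r^n}{n-2}\cdot\frac{1-r^{n-2}}{1-r^n}<\frac{n}{n-2}\le2$, while for $n=3$ the ratio is $\frac{2+r^3}{1+r+r^2}<2$. For monotonicity, with $s=-\log r$ one has $mr^m/(1-r^m)=s^{-1}\varphi(sm)$, where $\varphi(x)=x/(e^x-1)$ satisfies $-\tfrac12<\varphi'<0$; equivalently, use the paper's form $\lambda_\ell=r^{-1}[\nu_\ell\coth(s\nu_\ell)+\tfrac{n-2}{2}]$ with $y\coth(sy)$ increasing.
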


\begin{proof}
We first obtain the spectral energy bound forced by
\eqref{eq:L2-nonpenetration}.  For every $t>r$, Parseval gives
\[
 \frac{1-\|A_t\phi\|_2^2}{t-r}
 =\sum_{\ell=0}^\infty
 \frac{1-a_\ell(t)^2}{t-r}E_\ell.
\]
The summands are nonnegative because $0<a_\ell(t)<1$, and for each fixed
$\ell$,
\[
 \frac{1-a_\ell(t)^2}{t-r}\longrightarrow2\lambda_\ell
 \qquad(t\downarrow r).
\]
Fatou's lemma therefore yields
\begin{equation}\label{eq:Fatou-lower}
 2\sum_{\ell=0}^\infty\lambda_\ell E_\ell
 \le\liminf_{t\downarrow r}
 \frac{1-\|A_t\phi\|_2^2}{t-r}.
\end{equation}
Dividing \eqref{eq:L2-nonpenetration} by
$\Rstar^2(t-r)>0$ and using
\[
 a_1(t)\to1,\qquad
 \frac{b_1(t)}{t-r}\to\tau_1,\qquad
 \frac{b_1(t)^2}{t-r}\to0,
\]
we obtain
\begin{equation}\label{eq:upper-limsup}
 \limsup_{t\downarrow r}
 \frac{1-\|A_t\phi\|_2^2}{t-r}
 \le\frac{2\tau_1}{\Rstar}c.
\end{equation}
Combining \eqref{eq:Fatou-lower} and \eqref{eq:upper-limsup} gives
\begin{equation}\label{eq:spectral-upper}
 \mathcal E:=\sum_{\ell=0}^\infty\lambda_\ell E_\ell
 \le\frac{\tau_1}{\Rstar}c=\lambda_1c,
\end{equation}
because
\[
 \lambda_1=\frac{n-1+r^n}{r(1-r^n)},\qquad
 \tau_1=\frac{n}{1-r^n},\qquad
 \Rstar=\frac{\tau_1}{\lambda_1}.
\]
In particular, the series defining $\mathcal E$ is finite and
\eqref{eq:spectral-upper} forces $c\ge0$.

We next record the spectral separation needed to exploit this upper bound.
Let $s=-\log r>0$ and $\nu_\ell=\ell+(n-2)/2$.  Rearranging
\eqref{eq:lambdaell} gives
\begin{equation}\label{eq:lambda-hyperbolic}
 \lambda_\ell=\frac1r\left[
 \nu_\ell\coth(s\nu_\ell)+\frac{n-2}{2}
 \right].
\end{equation}
For $y>0$,
\[
 \frac{\dd}{\dd y}\bigl(y\coth(sy)\bigr)
 =\frac{\sinh(2sy)-2sy}{2\sinh^2(sy)}>0,
\]
so $\lambda_\ell$ is strictly increasing.  Moreover,
\[
 \lambda_0=\frac{n-2}{r(1-r^{n-2})},
 \qquad
 \lambda_1=\frac{n-1+r^n}{r(1-r^n)}.
\]
If $n\ge4$, then
\[
 \frac{\lambda_1}{\lambda_0}
 =\frac{n-1+r^n}{n-2}\frac{1-r^{n-2}}{1-r^n}
 <\frac{n}{n-2}\le2.
\]
If $n=3$, then
\[
 \frac{\lambda_1}{\lambda_0}
 =\frac{2+r^3}{1+r+r^2}<2,
\]
because $2+r^3<2+2r+2r^2$ for $0<r<1$.  Thus
\begin{equation}\label{eq:gap}
 \lambda_0<\lambda_1<2\lambda_0.
\end{equation}

Finally, put $p=E_1$.  Since $\omega$ belongs to the degree-one space and
has $L^2$ norm one, Cauchy--Schwarz gives
\begin{equation}\label{eq:p-ge-c2}
 p\ge c^2.
\end{equation}
Using \eqref{eq:E-sum}, we have the exact identity
\begin{equation}\label{eq:spectral-defect}
\begin{aligned}
 \mathcal E-\lambda_1c
={}&(1-c)\bigl[\lambda_0-(\lambda_1-\lambda_0)c\bigr]\\
 &+(\lambda_1-\lambda_0)(p-c^2)
 +\sum_{\ell\ge2}(\lambda_\ell-\lambda_0)E_\ell.
\end{aligned}
\end{equation}
Every term on the right is nonnegative.  Indeed, $-1\le c\le1$, and
\eqref{eq:gap} gives
\[
 \lambda_0-(\lambda_1-\lambda_0)c
 \ge2\lambda_0-\lambda_1>0.
\]
If $c<1$, the first term in \eqref{eq:spectral-defect} is strictly
positive, contradicting \eqref{eq:spectral-upper}.  Hence $c=1$, and
$|\phi|=|\omega|=1$ almost everywhere gives
\[
 \|\phi-\omega\|_2^2=2-2c=0.
\]
This is \eqref{eq:innertrace}.
\end{proof}

Undoing the orthogonal normalization, \eqref{eq:outertrace} and
\eqref{eq:innertrace} show that for some $Q\in O(n)$,
\[
 \phi(\omega)=\psi(\omega)=Q\omega
\quad\text{a.e.}
\]
Substitution into the Poisson representation \eqref{eq:critical-Poisson}
gives
\[
 h(t,\omega)=\bigl(\Rstar a_1(t)+b_1(t)\bigr)Q\omega.
\]
Substitution of \eqref{eq:first-multipliers} and
$\Rstar=nr/(n-1+r^n)$ yields
\[
 \Rstar a_1(t)+b_1(t)=
 \frac{(n-1)t+r^nt^{1-n}}{n-1+r^n}=H_{n,r}(t).
\]
This is exactly \eqref{eq:critical-map}.  The radial computation in
Section~\ref{sec:radial} already shows that this map is a homeomorphism of
the claimed annuli.  This proves Theorem~\ref{thm:critical-rigidity}\rm(i).

\subsection{The end-reversing critical case}
\label{sec:rigidity-reversing}
We now assume that the ends are interchanged and
\begin{equation}\label{eq:reverse-critical-assumption}
 R=\Rminus(r)=\frac{nr^{n-1}}{1+(n-1)r^n}.
\end{equation}
The Poisson representation is
\begin{equation}\label{eq:reverse-Poisson}
 h(t,\cdot)=A_t\phi+RB_t\psi,
 \qquad
 \phi,\psi\in L^\infty(\Sph;\Sph).
\end{equation}

For the finite-slice coupling of Lemma~\ref{lem:slice}, set
\[
 I_A^-(t)=\ip{\phi}{A_t\Ncal_t},
 \qquad
 I_B^-(t)=\ip{\psi}{B_t\Ncal_t},
\]
and
\[
 G_t^-=
 \int\bigl(|h(t,\omega)|-R\bigr)\dd m_t(\omega)>0.
\]
Equations \eqref{eq:slice-identity}, \eqref{eq:Fubini-swap}, and
\eqref{eq:reverse-Poisson} give
\[
 I_A^-(t)+RI_B^-(t)=R+G_t^-.
\]
By \eqref{eq:AB-contraction} and the sphere-valued traces,
$I_A^-(t)\le a_1(t)$ and $I_B^-(t)\le b_1(t)$.  Hence
\begin{equation}\label{eq:reverse-defect-split}
 D_n^-(t):=a_1(t)+Rb_1(t)-R
 =\bigl(a_1(t)-I_A^-(t)\bigr)
 +R\bigl(b_1(t)-I_B^-(t)\bigr)+G_t^-.
\end{equation}
All terms on the right are nonnegative.  Direct substitution of
\eqref{eq:first-multipliers} and \eqref{eq:reverse-critical-assumption}
gives the exact factorization
\begin{align}
 D_n^-(t)
 &=\frac{r^{n-1}\bigl((n-1)t+t^{1-n}-n\bigr)}
 {1+(n-1)r^n}\notag\\
 &=\frac{r^{n-1}(1-t)^2}
 {[1+(n-1)r^n]t^{n-1}}
 \sum_{j=0}^{n-2}(j+1)t^j.
 \label{eq:reverse-defect-factor}
\end{align}
Consequently
\begin{equation}\label{eq:reverse-A-defect}
 0\le a_1(t)-I_A^-(t)
 \le D_n^-(t)=O((1-t)^2)
 \qquad(t\uparrow1).
\end{equation}

The reverse branch also requires continuity of the target inner trace; this
will later allow the limiting coupling to retain its graph support.
\begin{lemma}\label{lem:reverse-campanato}
Under \eqref{eq:reverse-Poisson}, the trace $\psi$ has a representative in
$C^{0,1/2}(\Sph;\Sph)$.  For this representative,
\begin{equation}\label{eq:reverse-q-uniform}
 \left\|q_t-\psi\right\|_\infty
 =\left\|\frac{h(t,\cdot)}{|h(t,\cdot)|}-\psi\right\|_\infty
 \longrightarrow0
 \qquad(t\uparrow1).
\end{equation}
\end{lemma}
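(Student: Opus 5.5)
The plan is to exploit two facts near the outer source end $t\uparrow1$. In the end-reversing case, Lemma~\ref{lem:ends} together with the open target annulus gives $|h(t,\omega)|>R$ everywhere, and $|h(t,\cdot)|\to R$ uniformly. Meanwhile the inner-trace contribution $A_t\phi$ is uniformly $O(1-t)$. Indeed, $0\le K_A(t,\cdot)\le C(1-t)$ near $t=1$, by the exponential majorant \eqref{eq:aell-uniform-exponential} summed against \eqref{eq:zonal-polynomial-bound}, or simply by the uniform convergence \eqref{eq:reverse-kernel-Cinfty}. Hence $\|A_t\phi\|_\infty\le C(1-t)$. From \eqref{eq:reverse-Poisson} and $|h|>R$ we then get the pointwise lower bound
\[
 |B_t\psi(\omega)|\ge 1-C(1-t)\qquad\text{for all }\omega\in\Sph,
\]
with $C$ independent of $\omega$.

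Next comes a variance identity for the positive kernel of $B_t$. Write $\beta(t)=B_t1=b_0(t)$; by \eqref{eq:bell} this satisfies $1-\beta(t)=O(1-t)$ and $\beta\le1$. Set $\bar\psi_t(\omega)=B_t\psi(\omega)/\beta(t)$. Since $|\psi|=1$ a.e., we have
\[
 \int K_B(t,\omega\cdot\xi)\,|\psi(\xi)-\bar\psi_t(\omega)|^2\dd\mu(\xi)
 =\beta(t)-\frac{|B_t\psi(\omega)|^2}{\beta(t)}\le C(1-t),
\]
using the previous display. I then need a lower bound for the kernel on small caps: for $\rho=1-t$ small and $|\xi-\omega|\le\rho$,
\[
 K_B(t,\omega\cdot\xi)\ge c\,\rho^{1-n}.
\]
This is the standard lower bound for the Poisson kernel of a smooth domain at distance $\rho$ from the boundary. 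I would obtain it by comparing with the Poisson kernel of an interior tangent ball, since $D$ contains the ball of radius $(1-r)/2$ tangent to the outer sphere at $\omega$, and the Poisson kernel is monotone under domain inclusion via the maximum principle. Combining the two displays gives, for every cap $C_\rho(\omega)$ of geodesic radius $\rho$,
\[
 \frac1{\mu(C_\rho(\omega))}\int_{C_\rho(\omega)}|\psi-\bar\psi_t(\omega)|^2\dd\mu\le C\rho .
\]
This is a Campanato condition with exponent $2\alpha=1$. Campanato's characterization on the sphere, a doubling compact manifold, then yields a representative $\psi\in C^{0,1/2}$. Moreover, at every Lebesgue point $|\psi|=1$, so the continuous representative is sphere-valued.

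For the uniform convergence \eqref{eq:reverse-q-uniform}, I would estimate
\[
 |B_t\psi(\omega)-\beta(t)\psi(\omega)|\le\int K_B(t,\omega\cdot\xi)\,|\psi(\xi)-\psi(\omega)|\dd\mu(\xi)\le[\psi]_{1/2}\int K_B(t,\omega\cdot\xi)\,|\xi-\omega|^{1/2}\dd\mu(\xi).
\]
The last integral tends to $0$ uniformly in $\omega$ because the kernel is zonal, bounded by $\beta\le1$ in mass, and concentrates at $\xi=\omega$: its mass off any fixed cap tends to $0$, since $B_t$ converges strongly to the identity on continuous functions. Then $h(t,\omega)=R\psi(\omega)+o(1)$ uniformly. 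Because $|h|\ge R>0$, the projection $y\mapsto y/|y|$ is Lipschitz there, so $q_t\to\psi$ uniformly.

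The main obstacle I expect is the quantitative kernel lower bound $K_B\gtrsim(1-t)^{1-n}$ on caps of radius $1-t$, uniformly in $\omega$; everything else follows from the variance identity. I would first try the tangent-ball comparison. If that proves awkward, a fallback is the explicit series $\sum b_\ell(t)Z_\ell$ compared with the ball Poisson kernel $\sum t^\ell Z_\ell$. Their difference is controlled because $|b_\ell(t)-t^\ell|\le Cr^{2\ell+n-2}$, and summing against \eqref{eq:zonal-polynomial-bound} gives a uniformly bounded smooth error, while the ball kernel has the explicit lower bound $c(1-t)^{1-n}$ on such caps.
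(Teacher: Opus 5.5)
Your proposal is correct and follows essentially the same route as the paper. Both arguments use an $O(1-t)$ bound on $A_t\phi$, the variance identity for the normalized outer kernel $B_t/\beta(t)$, and a lower bound $K_{B,t}\gtrsim(1-t)^{1-n}$ on caps of radius $1-t$. They then apply the Campanato characterization, prove uniform convergence $h(t,\cdot)\to R\psi$, and finish with the Lipschitz radial projection.

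The differences are minor. The paper obtains both the kernel lower bound and $B_t\psi\to\psi$ uniformly from the exact identity $K_{B,t}(\omega,\xi)=K_{\mathbb B}(t\omega,\xi)-A_tf_\xi(\omega)$. This is your series fallback in closed form, but your error bound there is off: the correct statement is $t^\ell-b_\ell(t)=r^\ell a_\ell(t)\le r^\ell$, not $Cr^{2\ell+n-2}$. Your primary tangent-ball comparison also works, provided the ball is tangent at the boundary point where the kernel is evaluated, or you use zonality to swap $\omega$ and $\xi$.
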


\begin{proof}
Put $s=1-t$ and write
\begin{equation}\label{eq:zero-multipliers}
 \alpha(t):=a_0(t)=\frac{r^{n-2}(t^{2-n}-1)}{1-r^{n-2}},
 \qquad
 \beta(t):=b_0(t)=\frac{1-r^{n-2}t^{2-n}}{1-r^{n-2}}.
\end{equation}
Thus $\alpha+\beta=1$, $\alpha(1-s)\le C_{n,r}s$, and
$\beta(1-s)\ge1/2$ for all sufficiently small $s$.  Positivity of the
Poisson kernel shows that
\[
 \widehat B_t:=\frac{B_t}{\beta(t)}
\]
is a Markov operator.  Set
\[
 m_s(\omega)=\widehat B_{1-s}\psi(\omega).
\]
Since $\|A_t\phi\|_\infty\le\alpha(t)$ and every image point lies outside
the target inner sphere, \eqref{eq:reverse-Poisson} gives
\[
 R<|h(t,\omega)|
 \le\alpha(t)+R\beta(t)|m_s(\omega)|.
\]
It follows uniformly in $\omega$ that
\begin{equation}\label{eq:reverse-ms-defect}
 0\le1-|m_s(\omega)|
 \le\frac{1-R}{R}\frac{\alpha(1-s)}{\beta(1-s)}
 \le Cs.
\end{equation}

Let $p_s(\omega,\xi)$ be the probability kernel of $\widehat B_{1-s}$ with
respect to $\mu$.  Since $|\psi|=1$ almost everywhere, the probability
variance identity and \eqref{eq:reverse-ms-defect} yield
\begin{equation}\label{eq:reverse-variance}
 \int_{\Sph}|\psi(\xi)-m_s(\omega)|^2
 p_s(\omega,\xi)\dd\mu(\xi)
 =1-|m_s(\omega)|^2\le Cs.
\end{equation}

We next prove the local lower bound on $p_s$ without appealing to a
general-domain kernel estimate.  With normalized surface measure, the unit
ball Poisson kernel is
\[
 K_{\mathbb B}(t\omega,\xi)=\frac{1-t^2}{|t\omega-\xi|^n}.
\]
Let $\mathcal P_t^{\mathbb B}g$ denote its Poisson extension.  The restriction of
$\mathcal P^{\mathbb B}g$ to the annulus has inner boundary value
$\mathcal P_r^{\mathbb B}g$ and outer boundary value $g$.  Uniqueness of the annular
Dirichlet problem therefore gives, for every $g\in C(\Sph)$,
\begin{equation}\label{eq:ball-annulus-operator}
 \mathcal P_t^{\mathbb B}g
 =A_t(\mathcal P_r^{\mathbb B}g)+B_tg.
\end{equation}
For fixed $t,\omega$, both sides are bounded linear functionals on
$C(\Sph)$.  Equality of their Riesz representing measures, followed by
continuity of the densities, gives the pointwise kernel identity
\begin{equation}\label{eq:annular-outer-kernel-identity}
 K_{B,t}(\omega,\xi)
 =K_{\mathbb B}(t\omega,\xi)-A_tf_\xi(\omega),
 \qquad
 f_\xi(\eta)=K_{\mathbb B}(r\eta,\xi).
\end{equation}
Since the two boundary components are separated,
\[
 0\le f_\xi(\eta)
 \le F_{n,r}:=\frac{1+r}{(1-r)^{n-1}},
 \qquad
 0\le A_tf_\xi\le F_{n,r}\alpha(t)\le Cs.
\]
If $d_{\Sph}(\omega,\xi)\le s$, then
$|(1-s)\omega-\xi|\le2s$ and $1-(1-s)^2\ge s$.  Hence
\eqref{eq:annular-outer-kernel-identity} implies, after reducing the fixed
upper bound for $s$ if necessary,
\begin{equation}\label{eq:reverse-kernel-lower}
 p_s(\omega,\xi)=
 \frac{K_{B,1-s}(\omega,\xi)}{\beta(1-s)}
 \ge c_{n,r}s^{-(n-1)}.
\end{equation}

Set $d=n-1$ and let $B_\rho(\omega)$ denote a geodesic ball in $\Sph$.
Equations \eqref{eq:reverse-variance}--\eqref{eq:reverse-kernel-lower}
give
\[
 \int_{B_s(\omega)}|\psi(\xi)-m_s(\omega)|^2\dd\mu(\xi)
 \le Cs^{d+1}.
\]
The ordinary average minimizes the squared error, and
$\mu(B_s(\omega))\asymp s^d$ uniformly in the center.  Thus
\begin{equation}\label{eq:reverse-campanato-estimate}
 \frac{1}{\mu(B_s(\omega))}
 \int_{B_s(\omega)}
 |\psi-\psi_{B_s(\omega)}|^2\dd\mu\le Cs.
\end{equation}

We construct the representative directly.  Fix a sufficiently small
$\rho_0>0$.  By \eqref{eq:reverse-campanato-estimate} and the doubling
property of small geodesic balls,
\[
 |\psi_{B_{\rho/2}(\omega)}-\psi_{B_\rho(\omega)}|
 \le C\rho^{1/2}.
\]
Summation over dyadic radii gives a uniform limit
\[
 \widetilde\psi(\omega)
 =\lim_{k\to\infty}\psi_{B_{2^{-k}\rho_0}(\omega)}.
\]
For $0<\rho\le\rho_0$, choose $k$ such that
$2^{-k-1}\rho_0<\rho\le2^{-k}\rho_0$.  Doubling and
\eqref{eq:reverse-campanato-estimate} compare the averages at these
comparable radii, while the dyadic tail is summable.  Hence
\begin{equation}\label{eq:reverse-representative-average}
 |\widetilde\psi(\omega)-\psi_{B_\rho(\omega)}|
 \le C\rho^{1/2}.
\end{equation}
If $\delta=d_{\Sph}(\omega,\eta)$ is small, take $\rho=4\delta$ and
$E=B_\rho(\omega)\cap B_\rho(\eta)$.  Local volume comparison gives
$\mu(E)\ge c\rho^d$.  Therefore
\begin{align*}
 |\psi_{B_\rho(\omega)}-\psi_E|
 &\le \mu(E)^{-1/2}
 \left(\int_{B_\rho(\omega)}
 |\psi-\psi_{B_\rho(\omega)}|^2\dd\mu\right)^{1/2}
 \le C\rho^{1/2},
\end{align*}
and the same estimate holds with $\eta$ in place of $\omega$.  Combining
these two estimates with \eqref{eq:reverse-representative-average} yields
\[
 |\widetilde\psi(\omega)-\widetilde\psi(\eta)|
 \le C d_{\Sph}(\omega,\eta)^{1/2}.
\]
Large distances are absorbed by boundedness.  Lebesgue differentiation
shows that $\widetilde\psi=\psi$ almost everywhere.  Since the original
trace is sphere-valued, continuity gives $|\widetilde\psi|=1$ everywhere;
we retain the notation $\psi$ for this representative.

For continuous boundary values,
$\mathcal P_t^{\mathbb B}\psi\to\psi$ uniformly.
Equation \eqref{eq:ball-annulus-operator} also gives
\[
 B_t\psi=\mathcal P_t^{\mathbb B}\psi
 -A_t(\mathcal P_r^{\mathbb B}\psi),
\]
whose second term is bounded by
$\alpha(t)\|\mathcal P_r^{\mathbb B}\psi\|_\infty$ and hence tends uniformly to
zero.  Since $\|A_t\phi\|_\infty\le\alpha(t)\to0$,
\[
 \|h(t,\cdot)-R\psi\|_\infty\longrightarrow0.
\]
Finally, if $|y|\ge R$ and $|z|=R$, then
\[
 \left|\frac{y}{|y|}-\frac{z}{|z|}\right|
 \le\frac{2}{R}|y-z|.
\]
Taking $y=h(t,\omega)$ and $z=R\psi(\omega)$ proves
\eqref{eq:reverse-q-uniform}.
\end{proof}

To pass to the endpoint in \eqref{eq:reverse-A-defect}, we use the
inner-to-outer operator $\mathcal S$ from
\eqref{eq:reverse-transfer-operator}.  Lemma~\ref{lem:endpoint-transfer}
supplies its smooth, positive, strictly increasing kernel and the
$C^\infty$ convergence \eqref{eq:reverse-kernel-Cinfty}.

We now identify both traces.  Choose any sequence $t_j\uparrow1$ and, after passing to a subsequence,
write
\[
 \gamma_{t_j}\rightharpoonup\gamma.
\]
The first marginal of $\gamma$ is still $\mu$.  Put $s_j=1-t_j$.
The convergence \eqref{eq:reverse-kernel-Cinfty} and
Lemma~\ref{lem:uniform-varying} give
\begin{equation}\label{eq:reverse-uniform-varying}
 \frac{A_{t_j}\Ncal_{t_j}}{s_j}\longrightarrow
 \mathcal S\Ncal_\gamma
 \quad\text{uniformly on }\Sph.
\end{equation}

Divide \eqref{eq:reverse-A-defect} by $s_j$.  Since
$a_1(t_j)/s_j\to\sigma_1$, \eqref{eq:reverse-uniform-varying} and
$\phi\in L^\infty$ imply
\begin{equation}\label{eq:reverse-transfer-equality}
 \ip{\phi}{\mathcal S\Ncal_\gamma}=\sigma_1.
\end{equation}
By Lemma~\ref{lem:endpoint-transfer} and
Proposition~\ref{prop:coupling-rigidity},
\[
 \ip{\phi}{\mathcal S\Ncal_\gamma}
 \le\|\mathcal S\Ncal_\gamma\|_1\le\sigma_1.
\]
Thus equality holds throughout.  There exists $M\in O(n)$ such that
\[
 \gamma=(x,Mx)_\#\mu,
 \qquad
 \dd\Ncal_\gamma(\omega)=M^T\omega\dd\mu(\omega),
 \qquad
 \mathcal S\Ncal_\gamma(z)=\sigma_1M^Tz.
\]
Substitution into \eqref{eq:reverse-transfer-equality}, together with
$|\phi|=1$, gives
\begin{equation}\label{eq:reverse-innertrace}
 \phi(z)=M^Tz
 \quad\text{for }\mu\text{-almost every }z.
\end{equation}

It remains to lock the other trace; this is where the continuity proved in
Lemma~\ref{lem:reverse-campanato} is essential.  Each $\gamma_{t_j}$ is
supported on $x=q_{t_j}(\omega)$.  Hence the continuous nonnegative function
\[
 F(x,\omega)=|x-\psi(\omega)|
\]
satisfies
\[
 \int F\dd\gamma
 =\lim_{j\to\infty}\int F\dd\gamma_{t_j}
 \le\lim_{j\to\infty}\|q_{t_j}-\psi\|_\infty=0.
\]
Thus $x=\psi(\omega)$ for $\gamma$-almost every $(x,\omega)$.  The
orthogonal graph relation is $\omega=Mx$, so
\[
 \psi(Mx)=x
\]
for $\mu$-almost every $x$.  Equivalently,
\begin{equation}\label{eq:reverse-outertrace}
 \psi(\omega)=M^T\omega
 \quad\text{for }\mu\text{-almost every }\omega.
\end{equation}
The equality is in fact pointwise because $\psi$ is continuous.

Let $Q=M^T$.  Equations \eqref{eq:reverse-Poisson},
\eqref{eq:reverse-innertrace}, and \eqref{eq:reverse-outertrace}, together
with the degree-one multipliers, give
\[
 h(t,\omega)=\bigl(a_1(t)+\Rminus(r)b_1(t)\bigr)Q\omega.
\]
Substitution of \eqref{eq:first-multipliers} yields
\[
 a_1(t)+\Rminus(r)b_1(t)
 =\frac{r^{n-1}\bigl((n-1)t+t^{1-n}\bigr)}{1+(n-1)r^n}.
\]
This is \eqref{eq:reverse-critical-map}.  The monotonicity computation in
\eqref{eq:Hminus} shows that it is an end-reversing homeomorphism.  This
proves Theorem~\ref{thm:critical-rigidity}\rm(ii).

\section*{Acknowledgments}
\noindent This work was supported by the National Key R\&D Program of China (2025YFA1017603). The authors acknowledge the use of AI tools. All mathematical statements and proofs were independently verified by the authors, who take full responsibility for the content of the manuscript.

\bigskip
\noindent
(Bin Deng) School of Mathematics and Statistics, Wuhan University, Wuhan,
Hubei 430072, P.R. China.\\
Email address: \href{mailto:dbmath@whu.edu.cn}{dbmath@whu.edu.cn}

\par\smallskip
\noindent
(Jiahuan Li) School of Mathematical Sciences, University of Science and
Technology of China, Hefei, Anhui 230026, P.R. China.\\
Email address: \href{mailto:jiahuan@mail.ustc.edu.cn}{jiahuan@mail.ustc.edu.cn}

\par\smallskip
\noindent
(Yilu Liu) School of Mathematical Sciences, University of Science and
Technology of China, Hefei, Anhui 230026, P.R. China.\\
Email address: \href{mailto:liuylgeoanaly@mail.ustc.edu.cn}{liuylgeoanaly@mail.ustc.edu.cn}

\par\smallskip
\noindent
(Xi-Nan Ma) School of Mathematical Sciences, University of Science and
Technology of China, Hefei, Anhui 230026, P.R. China.\\
Email address: \href{mailto:xinan@ustc.edu.cn}{xinan@ustc.edu.cn}

\end{document}